\documentclass[12pt]{amsart}


\usepackage{amssymb}
\usepackage{hyperref} 
\usepackage{enumitem}

\usepackage{amsmath}
\usepackage{latexsym}
\usepackage{extarrows}
\usepackage{txfonts}
\usepackage{mathtools}
\usepackage{bm}
\usepackage{tikz-cd}
\usepackage{xcolor}

\makeatletter
\@namedef{subjclassname@2020}{%
  \textup{2020} Mathematics Subject Classification}
\makeatother

\usepackage[T1]{fontenc}

\theoremstyle{plain}
\newtheorem{theorem}{Theorem}[section]
\newtheorem{lemma}[theorem]{Lemma}

\newtheorem{claim}[theorem]{Claim}

\theoremstyle{definition}
\newtheorem{definition}[theorem]{Definition}

\newtheorem{example}[theorem]{Example}

\theoremstyle{remark}
\newtheorem{remark}[theorem]{Remark}

\newcommand{\Hom}{\mathrm{Hom}}

\newcommand{\id}{\mathrm{id}}
\newcommand{\supp}{\mathrm{supp}}
\newcommand{\cB}{\mathcal{B}}
\newcommand{\cA}{\mathcal{A}}
\newcommand{\Z}{\mathbb{Z}}

\newcommand{\N}{\mathbb{N}}
\newcommand{\E}{\mathbb E}
\newcommand{\C}{\mathcal C}

\newcommand{\I}{\mathcal I}
\newcommand{\Zk}{\mathrm Z}
\newcommand{\norm}[1]{\left\lVert #1\right\rVert}
\newcommand{\Prob}{\mathcal{P}} 
\newcommand{\pr}{\mathrm{pr}}

\title[Non-ergodic structure theorem]{The non-ergodic Host--Kra--Ziegler structure theorem for $\Z^d$-actions via measurable selections}

\author[Jamneshan]{Asgar Jamneshan}
\address{Mathematical Institute, 
	University of Bonn,
	Endenicher Allee 60, 53115, Bonn,
	Germany}
\email{ajamnesh@math.uni-bonn.de}

\author[Machado]{Simon Machado}
 \address{ETH Zurich, 
 R\"{a}mistrasse 101, 
 8006 Zurich, 
 Switzerland}
\email{smachado@ethz.ch}

\begin{document}

\begin{abstract}
We establish a non-ergodic version of the Host--Kra--Ziegler structure theorem for measure-preserving $\Z^d$-actions. Our argument reduces the non-ergodic case to the ergodic theorem (for $d\ge 2$ due to Candela and Szegedy) via a measurable selection procedure. We also establish a non-ergodic vertical nilcharacter version of our main result. 
The non-ergodic version of the Host--Kra--Ziegler structure theorem is a key input in the companion paper by the second author and Fraczyk classifying point processes (i.e. random subsets) of $\mathbb{Z}^d$ whose law is invariant under the group $\mathrm{ASL}_d(\mathbb{Z})$ of affine transformations.
\end{abstract}

\maketitle

\baselineskip=17pt

\section{Introduction}

Host and Kra \cite{hk} and, independently, Ziegler \cite{z} established the following remarkable structure theorem for ergodic $\Z$-actions\footnote{Host and Kra \cite{hk} developed a theory of cubical structures and the corresponding seminorms. They used this framework to define systems of order $k$ and proved a structure theorem showing that ergodic systems of order $k$ are precisely $k$-step pro-nilsystems. This led to the identification of characteristic factors for multiple ergodic averages and to a proof of their convergence in $L^2$. Independently, Ziegler \cite{z} obtained the $L^2$-convergence of the same averages by proving that the universal characteristic factors she constructed are pro-nilsystems, via a different approach. Leibman \cite{leibman-factor} later clarified the connection between these two constructions, showing that the Host--Kra factors of order $k$ coincide with Ziegler's $k$th universal characteristic factors.}. The corresponding conclusion for ergodic $\Z^d$-actions follows from a more general theory developed in \cite{cz}.

\begin{theorem}\label{t:hkz}
Let $\mathrm{X}=(X,\nu,T)$ be an ergodic measure-preserving $\Z^d$-system and let $k\ge 1$.
Then the $k$th Host--Kra factor $\Zk_k(\mathrm{X})$ of $\mathrm{X}$ is (measure-theoretically) isomorphic to an inverse limit of $k$-step nilsystems.
\end{theorem}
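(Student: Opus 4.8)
\emph{Proof proposal.} The plan is to run the by-now-standard argument through the Gowers--Host--Kra uniformity seminorms and then invoke the structure theory of nilspaces. First I would construct, for each $n\ge 0$, the cubic measure $\nu^{[n]}$ on $X^{2^{n}}$: put $\nu^{[0]}=\nu$, and let $\nu^{[n+1]}$ on $X^{2^{n}}\times X^{2^{n}}$ be the relatively independent self-joining of $\nu^{[n]}$ over the $\sigma$-algebra of sets invariant under the diagonal $\Z^d$-action on $X^{2^{n}}$. Setting $\norm{f}_{U^{n}}^{2^{n}}=\int_{X^{2^{n}}}\prod_{\omega\in\{0,1\}^{n}} f(x_\omega)\,d\nu^{[n]}(x)$, a Cauchy--Schwarz--Gowers induction shows that $\norm{\cdot}_{U^{n+1}}$ is a seminorm dominating $\norm{\cdot}_{U^{n}}$, and that it is dual to a factor: there is a $T$-invariant sub-$\sigma$-algebra $\Zk_k(\mathrm X)$ with $\E[f\mid \Zk_k(\mathrm X)]=0\iff\norm{f}_{U^{k+1}}=0$. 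One records the base cases ($\Zk_0$ trivial, $\Zk_1$ the Kronecker factor) and the self-absorbing identity $\Zk_k(\Zk_k(\mathrm X))=\Zk_k(\mathrm X)$, i.e.\ $\Zk_k(\mathrm X)$ is a system of order $k$.

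Next I would translate this dynamical data into combinatorics. Restricting the family $(\nu^{[n]})_{n\ge 0}$ to $\Zk_k(\mathrm X)$ produces, by consistency of the cubic measures under the face maps and coordinate permutations together with the order-$k$ property, a \emph{cubic coupling of order $k$} in the sense of Candela--Szegedy. The structure theory for cubic couplings yields a compact nilspace $\mathsf N$ of degree $k$ whose cube sets recover the $\nu^{[n]}$, with the $\Z^d$-action realized by a group of translations of $\mathsf N$. The decisive input is then the inverse-limit theorem for nilspaces (Gutman--Manners--Varj\'u, in the form used in \cite{cz}): a compact degree-$k$ nilspace is an inverse limit of finite-dimensional degree-$k$ nilspaces, and each of the latter is a $k$-step nilmanifold, i.e.\ a quotient $G/\Gamma$ with $G$ a $k$-step nilpotent Lie group. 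Pushing the translation action through this correspondence exhibits $\Zk_k(\mathrm X)$ as an inverse limit of $k$-step nilsystems; a final check that these nilsystems carry their Haar measures and that the connecting maps are measure isomorphisms completes the argument.

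The entire difficulty is concentrated in the nilspace structure theorem, which I would import essentially as a black box. Its proof is an induction on the degree: one realizes a degree-$k$ nilspace as a principal bundle over its degree-$(k-1)$ factor with compact abelian structure group and must then control the classifying cocycle --- this is where the higher-order Conze--Lesigne equations enter and where one uses that the relevant cohomology is tame (via Mackey-- and Moore-type arguments) to split off the nilmanifold pieces. A point specific to $d\ge 2$ is that a \emph{single} $\Z^d$-action must respect the nilspace, not merely $d$ separately analyzed $\Z$-actions; this is why the argument is organized around the full cube structure from the outset, as in \cite{cz}, rather than by iterating the one-dimensional theorem. In a self-contained treatment I would expect the original work to lie entirely in the dictionary between measure-preserving $\Z^d$-systems and cubic couplings, the nilspace side being quoted wholesale.
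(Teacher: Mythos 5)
The paper does not prove Theorem~\ref{t:hkz} at all: it is quoted as an external input, attributed to Host--Kra \cite{hk} and Ziegler \cite{z} for $d=1$ and to Candela--Szegedy \cite{cz} for general $d$, and the entire point of the paper is to deduce the non-ergodic Theorem~\ref{t:main} from it. So there is no internal proof to compare yours against. What you have written is a reasonable roadmap of the Candela--Szegedy route that the paper cites for $d\ge 2$: cubic measures, the seminorm/factor duality \eqref{eq:Zk-characterization}, passage to a cubic coupling of order $k$, and the nilspace inverse-limit theorem. As a proof, however, it is not self-contained in any meaningful sense: the two genuinely hard steps --- the structure theorem for cubic couplings and the inverse-limit theorem for compact nilspaces --- are imported wholesale, and they constitute essentially all of the content of the theorem. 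A few of the glossed steps also conceal real work: (a) the Gowers product requires the alternating conjugations $\C^{|\omega|}f$ for $\norm{\cdot}_{U^{n}}$ to be a seminorm on complex-valued functions; (b) a compact finite-rank degree-$k$ nilspace is not in general a nilmanifold --- one needs the torality/connectedness supplied by ergodicity of the $\Z^d$-action, together with an identification of the translation group as a nilpotent Lie group acting transitively, to land on genuine $k$-step nilsystems $G/\Gamma$ with Haar measure; and (c) one must check that the nilspace factor produced by the coupling machinery coincides measure-theoretically with $\Zk_k(\mathrm X)$, rather than merely sitting above or below it. If your aim was to justify quoting the theorem, your sketch is consistent with the paper's attribution; if it was to prove it, the black boxes are exactly where the theorem lives.
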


Informally, the factor $\Zk_k(\mathrm{X})$ governs the distribution of $(k+1)$ dimensional cubes in $\mathrm{X}$, and thereby controls the limiting behaviour of $k$-linear ergodic averages. Its structural description in geometric terms plays an important role in the understanding of additive patterns such as arithmetic progressions in arithmetic combinatorics. The formal definitions of the Host--Kra factors and of $k$-step nilsystems, and the notation appearing in Theorem \ref{t:main} and Theorem \ref{t:vertical_nilcharacters} are given in the preliminaries in  Section~\ref{s:prelim}. 

Motivated by applications, the aim of this note is to remove the ergodicity assumption from Theorem~\ref{t:hkz}. Our main result is the following non-ergodic variant.

\begin{theorem}\label{t:main}
Let $\mathrm{X}=(X,\nu,T)$ be a not necessarily ergodic measure-preserving $\Z^d$-system and let $k\ge 1$.
Then there exists an inverse system of factors
\[
 \mathrm{Y}_1 \twoheadleftarrow \mathrm{Y}_2 \twoheadleftarrow  \cdots \twoheadleftarrow  \mathrm{X}
\]
such that:
\begin{itemize}
\item[(i)] each $\mathrm{Y}_n$ is a bundle of $k$-step nilsystems over the invariant factor $\Omega$, and the bundle projection $\mathrm{Y}_n\to\Omega$ agrees with the projection on the invariant factor;
\item[(ii)] $\Zk_k(\mathrm{X})$ is (measure-theoretically) isomorphic to the inverse limit of the $\mathrm{Y}_n$.
\end{itemize}
\end{theorem}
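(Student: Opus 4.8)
The plan is to reduce everything to the ergodic theorem via the ergodic decomposition, and the main work is to make the resulting family of nilsystems depend measurably on the ergodic component. Concretely, let $\Omega=(X,\nu,T)/\mathcal I$ denote the invariant factor, and let $\nu=\int_\Omega \nu_\omega\,d\mu(\omega)$ be the ergodic decomposition, so that for $\mu$-a.e.\ $\omega$ the system $\mathrm X_\omega=(X,\nu_\omega,T)$ is ergodic. By Theorem~\ref{t:hkz} applied fiberwise, each $\Zk_k(\mathrm X_\omega)$ is isomorphic to an inverse limit of $k$-step nilsystems $N_{\omega,n}=G_{\omega,n}/\Gamma_{\omega,n}$. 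The first step is to verify that the Host--Kra factor of the non-ergodic system disintegrates over $\Omega$ with fibers $\Zk_k(\mathrm X_\omega)$; this is essentially because the defining seminorms $\norm{\cdot}_{U^{k+1}}$ and the cubic measures $\mu^{[k+1]}$ are defined via conditional expectations over $\I$, hence are fiberwise the ergodic objects. The second step is a filtration argument: one truncates the inverse limits uniformly. For this I would invoke a standard structure-theoretic fact that a $k$-step nilsystem is, up to isomorphism, determined by countably many invariants (the dimensions of the Lie group, the structure constants, the lattice), so one can partition $\Omega$ (measurably) into countably many pieces on each of which the model $N_{\omega,n}$ is "the same" nilsystem, and then glue.

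The heart of the argument, and the step the title advertises, is a \emph{measurable selection}: one must choose, measurably in $\omega$, the nilsystem presentation $(G_{\omega,n},\Gamma_{\omega,n})$ together with the isomorphism $\Zk_k(\mathrm X_\omega)_{\le n}\cong N_{\omega,n}$. I would set this up as follows. Form the standard Borel space parametrizing (isomorphism data for) $k$-step nilsystems of complexity $\le n$ together with a measure-isomorphism to a factor of the underlying space; the assignment $\omega\mapsto$ (the set of valid such data) is a Borel-measurable map into closed subsets of this Polish space, because the conditions defining "$N$ is a model for $\Zk_k(\mathrm X_\omega)_{\le n}$" are Borel in the joint data (they reduce to equalities of finitely many moments / cube-measure evaluations, which vary measurably by the disintegration). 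One then applies the Jankov--von Neumann or Kuratowski--Ryll-Nardzewski measurable selection theorem to pick a measurable section $\omega\mapsto (G_{\omega,n},\Gamma_{\omega,n},\phi_{\omega,n})$. Packaging these sections over all $\omega$ yields the bundle $\mathrm Y_n\to\Omega$ whose fiber over $\omega$ is $N_{\omega,n}$: as a measure space it is the disintegration $\int_\Omega N_{\omega,n}\,d\mu(\omega)$, with the $\Z^d$-action acting fiberwise by the selected nilrotations, and the bundle projection is by construction the projection to $\Omega$, giving~(i).

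Finally, the compatibility of the selections across $n$ gives the tower $\mathrm Y_1\twoheadleftarrow\mathrm Y_2\twoheadleftarrow\cdots$: the factor maps $\Zk_k(\mathrm X_\omega)_{\le n+1}\to\Zk_k(\mathrm X_\omega)_{\le n}$ are also part of the Borel data one selects, so one can run a single measurable selection on the space of \emph{compatible towers of length $n$} (or iterate the selection while keeping track of the previous choice). Taking the inverse limit over $n$ in the category of measure-preserving systems, and using that $\Zk_k(\mathrm X)$ disintegrates over $\Omega$ into $\lim_n \Zk_k(\mathrm X_\omega)_{\le n}=\Zk_k(\mathrm X_\omega)$, one obtains~(ii) by assembling the fiberwise isomorphisms into a global measure-theoretic isomorphism (again a measurable-selection / gluing step, or simply the observation that a fiberwise isomorphism that is jointly measurable is an isomorphism of the integrated systems). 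The main obstacle I expect is purely in the bookkeeping of step two and the setup of the parametrizing Polish space: one must be careful that "$k$-step nilsystem" is phrased so that the space of presentations is genuinely standard Borel (fixing, e.g., the Lie algebra structure constants and a rational basis for the lattice), and that the matching condition with the abstract factor $\Zk_k(\mathrm X_\omega)$ is expressed through countably many real-valued Borel functionals of $\omega$ so that Borel measurability of the selection multifunction is clear; the rest is then a routine application of classical descriptive set theory together with the ergodic input of Theorem~\ref{t:hkz}.
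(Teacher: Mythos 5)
Your high-level strategy --- ergodic decomposition, fiberwise use of Theorem~\ref{t:hkz}, countability of nilmanifold models, and a Jankov--von Neumann selection --- is exactly the paper's strategy, but the proposal leaves unfilled precisely the two technical gaps that the paper's proof is built to close. First, your parametrizing space of ``(isomorphism data for) $k$-step nilsystems together with a measure-isomorphism to a factor'' is not obviously standard Borel: a space of measurable maps $X\to M_j$ has no useful Borel structure, and the assertion that the matching condition ``reduces to equalities of finitely many moments, which are Borel'' is exactly the point that needs an argument. The paper's device is to encode a factor map $\phi:(X,\nu_\xi)\to(M_j,m_{M_j})$ by its graph joining $\lambda=(\id_X,\phi)_*\nu_\xi\in\Prob(X\times M_j)$, so that the selection takes place in the Polish space $\bigsqcup_j\Prob(X\times M_j)$; being a graph joining is then a Borel condition (the double integral $\int\!\int\!\int d_j(m,m')\,d\lambda_x(m)\,d\lambda_x(m')\,d\nu_\xi(x)=0$, via a jointly Borel disintegration kernel due to Kallenberg), equivariance is Borel, and the map $\phi$ is recovered Borel-measurably from $\lambda$ afterwards. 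Without this (or an equivalent) encoding your multifunction is not set up on a standard Borel space and the selection theorem does not apply.

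Second, your ``truncation $\Zk_k(\mathrm X_\omega)_{\le n}$'' is not canonical: the ergodic structure theorem gives an inverse limit with no preferred indexing, so there is nothing to select compatibly across $n$, and nothing that guarantees the selected fibers exhaust $\Zk_k(\mathrm X_\omega)$ in a way that integrates over $\omega$. The paper replaces the truncation by a quantitative condition: fix a countable family $\{f_r\}$ dense in the unit ball of $L^2(\nu)$ and require the selected nilfactor at stage $n$ to approximate $f_1,\dots,f_n$ to within $2^{-2n}$ in $L^2(\nu_\xi)$ (uniformly in $\xi$, which is what makes the final $L^2(\nu)$ exhaustion argument work). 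It then abandons compatibility entirely: the selected factors $Y_n$ need not be nested, so one passes to the finite joins $Z_n=Y_1\vee\dots\vee Y_n$, and a genuinely nontrivial lemma (using the countability of rational subnilmanifolds up to translation and a further selection of trivializing affine automorphisms) shows that a finite join of bundles of $k$-step nilsystems over $\Omega$ is again such a bundle. Your alternative of selecting ``compatible towers'' in one stroke is conceivable but would compound the measurability difficulties above, and as stated it is not an argument. So: right skeleton, but the graph-joining encoding, the quantitative dense-family approximation, and the join-of-bundles lemma are missing, and these are the substance of the proof.
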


In a companion paper \cite{FraczykMachado}, Fraczyk and the second author leverage Theorem \ref{t:main} to classify random subsets of $\Z^d$ whose law is invariant under the affine group $\mathrm{ASL}_d(\mathbb{Z}) = \mathrm{SL}_d(\mathbb{Z}) \ltimes \mathbb{Z}^d$. This classification is driven by considerations in mathematical physics due to Marklof \cite[(20.2)]{Marklof}, alongside deep structural motivations in dynamics, including exchangeability \cite{de1937prevision} and discrete analogues of the Howe--Moore phenomenon \cite{HoweMoore}. The analysis in \cite{FraczykMachado} hinges on a two-step reduction where Theorem \ref{t:main} plays a pivotal role. Initially, invariance under upper triangular matrices in $\mathrm{SL}_d(\mathbb{Z})$ reduces the classification to a study of non-conventional ergodic averages, posing a structural problem for $\Z^d$-systems of order $k$. Methods from homogeneous dynamics are then deployed to resolve the resulting questions on bundles of nilsystems under an $\mathrm{ASL}_d(\mathbb{Z})$-action. Theorem \ref{t:main} acts as the crucial bridge connecting these two distinct dynamical environments.

A non-ergodic nilsequence formulation of Theorem \ref{t:hkz} for $\Z$-actions was established by Chu, Frantzikinakis, and Host, see \cite[Proposition 3.1]{chuetal}. This result has served as a key input in several ergodic-theoretic and arithmetic applications; see, e.g., \cite{f,fk,hsy,zk}. A version of Theorem~\ref{t:main} in the case $k=1$ for $\Z$-actions appears in \cite[Theorem 3.1]{edeko}. Closely related non-ergodic Mackey and Furstenberg--Zimmer structure theorems are developed in \cite{austin}; see also \cite{ejk}. 

We also record a non-ergodic version of the standard fact that ergodic systems of order $k$ are spanned by vertical nilcharacters. 

\begin{theorem}\label{t:vertical_nilcharacters}
Let $\mathrm X=(X,\nu,T)$ be a measure-preserving $\mathbb Z^d$-system and let $k\geq 1$. Then $L^2(\Zk_k(\mathrm X))$ is the closed linear span of the non-ergodic $k$-vertical nilcharacters on $\mathrm X$.
\end{theorem}

The case $k=1$ of Theorem \ref{t:vertical_nilcharacters} was established by Frantzikinakis and Host, see \cite[Theorem 5.2]{fh}, and has found numerous applications, see, e.g., \cite{abs,ar,fk,hernandez2026}  for some recent applications.

\subsection*{Acknowledgments}
The authors would like to thank Henrik Kreidler for helpful discussions. We thank Nikos Frantzikinakis for his feedback on earlier versions of the manuscript, in particular for suggesting Theorem \ref{t:vertical_nilcharacters}.  
 A.J. acknowledges support from the Deutsche Forschungsgemeinschaft (DFG, German Research Foundation) via the Heisenberg Grant -- 547294463 and its Excellence Strategy -- EXC-2047 -- 390685813. S.M. acknowledges support from a Hermann Weyl Instructorship at the Institute for Mathematical Research (FIM) at ETH Zurich.
 
\section{Preliminaries}\label{s:prelim}

In this section, we introduce our notation and collect some existing results that will be used in the proof of Theorem \ref{t:main}. We start by defining the Host--Kra factors via the device of Gowers--Host--Kra seminorms as introduced in \cite{hk}.  

Let $\mathrm{X}=(X,\nu,T)$ be a measure-preserving $\Z^d$-system formed on a standard Lebesgue probability space. 
Since $X$ is standard Borel, we fix once and for all a Polish topology on $X$ whose Borel $\sigma$-algebra coincides with the given one; this allows us to speak of the weak topology on spaces of probability measures on $X$ and products of $X$ with compact metric spaces.

We define measures $\nu^{[k]}$ on $X^{[k]}\coloneqq X^{\{0,1\}^k}$ inductively as follows.
Set $\nu^{[0]}:=\nu$ on $X^{[0]}:=X$.
For $k\ge 0$, define the diagonal $\Z^d$-action $T^{[k]}$ on $X^{[k]}$ by
\[
(T^{[k]})^{t}\bigl((x_\omega)_{\omega\in\{0,1\}^k}\bigr)
:=
\bigl(T^{t}x_\omega\bigr)_{\omega\in\{0,1\}^k},
\qquad t\in\Z^d.
\]
Let $\I^{[k]}$ be the $\sigma$-algebra of $T^{[k]}$-invariant sets in $X^{[k]}$.
Identify $X^{[k+1]}$ with $X^{[k]}\times X^{[k]}$ via
\[
(x_\omega)_{\omega\in\{0,1\}^{k+1}}
\longleftrightarrow
\Bigl( (x_{\omega 0})_{\omega\in\{0,1\}^k},\ (x_{\omega 1})_{\omega\in\{0,1\}^k} \Bigr).
\]
Then define $\nu^{[k+1]}$ to be the relatively independent joining of $\nu^{[k]}$ with itself
over $\I^{[k]}$, i.e.\ for all $f,g\in L^\infty(\nu^{[k]})$,
\[
\int_{X^{[k+1]}} f(x')\,g(x'')\,d\nu^{[k+1]}(x',x'')
:=
\int_{X^{[k]}} \E(f\mid \I^{[k]})\ \E(g\mid \I^{[k]})\ d\nu^{[k]}.
\]
For $f\in L^\infty(\nu)$ and $k\ge 1$, define
\[
\norm{f}_{U^k}^{2^k}
:=
\int_{X^{[k]}}
\prod_{\omega\in\{0,1\}^k}
\C^{|\omega|} f(x_\omega)\ d\nu^{[k]}((x_\omega)_\omega),
\]
where $\C$ denotes complex conjugation and $|\omega|=\omega_1+\cdots+\omega_k$. 

For each $k\ge 1$ there is a factor 
$\Zk_{k-1}=\Zk_{k-1}(\mathrm{X})$ of $\mathrm{X}$ characterized by the property that, for every
$f\in L^\infty(\nu)$,
\begin{equation}\label{eq:Zk-characterization}
\norm{f}_{U^k}=0
\quad\Longleftrightarrow\quad
\E(f\mid \Zk_{k-1})=0.
\end{equation}
We refer to $\Zk_{k}$ as the \emph{$k$th Host--Kra factor}.

We denote by 
\[
\pi_0:X\to \Omega
\]
the invariant factor, and write the ergodic decomposition as
\[
\nu=\int_\Omega \nu_\xi\, d\mu(\xi),
\]
where $\xi\mapsto \nu_\xi$ is a measurable probability kernel $\Omega\to \Prob(X)$ and $(X,\nu_\xi)$ is ergodic for $\mu$-a.e.\ $\xi$.

\begin{lemma}\label{lem:Zk_fiberwise}
Let $\mathrm{X}=(X,\nu,T)$ be a measure-preserving $\Z^d$-system and let $\nu=\int_\Omega \nu_\xi\, d\mu(\xi)$ be its ergodic decomposition. 
For all $k\geq 1$ and $\mu$-a.e.\ $\xi$ one has $\Zk_k(X,\nu_\xi, T)=(\Zk_k(\mathrm{X}),\nu_\xi, T)$. 
\end{lemma}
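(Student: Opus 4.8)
The plan is to show that the defining property \eqref{eq:Zk-characterization} of the Host--Kra factor is compatible with the ergodic decomposition, by an induction on $k$ that tracks how the seminorms $\norm{\cdot}_{U^k}$ behave fiberwise. The first step is to unwind the construction of $\nu^{[k]}$ along the ergodic decomposition: I claim that $\nu^{[k]} = \int_\Omega (\nu_\xi)^{[k]}\, d\mu(\xi)$, where $(\nu_\xi)^{[k]}$ is the cube measure built from the ergodic component $\nu_\xi$. This is proved by induction on $k$. For $k=0$ it is the definition of the ergodic decomposition. For the inductive step, one must understand the conditional expectation onto $\I^{[k]}$, the $T^{[k]}$-invariant sets on $X^{[k]}$. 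The key point is that since $\pi_0 \colon X \to \Omega$ is the invariant factor of $\mathrm X$, the coordinate-wise map $\pi_0^{[k]} \colon X^{[k]} \to \Omega^{\{0,1\}^k}$ is equivariant for the diagonal action, so the diagonal copy of $\Omega$ sits inside the invariant factor of $(X^{[k]}, \nu^{[k]}, T^{[k]})$; conditioning first on this diagonal $\Omega$ disintegrates $\nu^{[k]}$ over $\Omega$, and then on each fiber one conditions further onto the (now fiberwise) invariant sets. Concretely, $\E(f \mid \I^{[k]})$, restricted to the fiber over $\xi$, agrees $\mu$-a.e.\ with $\E_{(\nu_\xi)^{[k]}}(f \mid \I^{[k]}_\xi)$, where $\I^{[k]}_\xi$ is the invariant $\sigma$-algebra for $T^{[k]}$ on $(X^{[k]},(\nu_\xi)^{[k]})$; feeding this into the definition of the relatively independent self-joining gives the claimed disintegration at level $k+1$.

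Granting the disintegration of the cube measures, the seminorm identity
\[
\norm{f}_{U^k(\nu)}^{2^k} = \int_\Omega \norm{f}_{U^k(\nu_\xi)}^{2^k}\, d\mu(\xi)
\]
follows immediately by integrating the integrand $\prod_\omega \C^{|\omega|} f(x_\omega)$ against $\nu^{[k]} = \int_\Omega (\nu_\xi)^{[k]}\, d\mu(\xi)$ (the integrand is nonnegative after integration, being a power of a seminorm, so there is no issue with signs). Now fix $k$ and let $\mathrm Z = \Zk_k(\mathrm X)$ with its $\sigma$-algebra $\cZ$. I want to show that for $\mu$-a.e.\ $\xi$, the $\sigma$-algebra $\cZ$ is precisely $\Zk_k(X,\nu_\xi,T)$. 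One inclusion is soft: $\Zk_k(X,\nu_\xi,T) \subseteq \cZ$ for a.e.\ $\xi$, because a countable generating family of functions measurable with respect to $\Zk_k(X,\nu_\xi,T)$ can be chosen measurably in $\xi$ (here one would invoke the measurable-selection framework the paper advertises, or simply note that $\Zk_k(X,\nu_\xi)$ is generated by bounded functions $f$ with $\norm{g}_{U^{k+1}(\nu_\xi)} = 0$ for all $g \perp f$, and run the argument on a countable dense set). For the reverse inclusion, suppose $f \in L^\infty(\nu)$ is $\cZ$-measurable but $\E_{\nu_\xi}(f \mid \Zk_k(X,\nu_\xi)) \ne f$ on a positive-$\mu$-measure set of $\xi$; writing $g_\xi = f - \E_{\nu_\xi}(f \mid \Zk_k(X,\nu_\xi))$, we have $\norm{g_\xi}_{U^{k+1}(\nu_\xi)} = 0$ for those $\xi$, yet $g_\xi \ne 0$. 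One then splices the $g_\xi$ together into a single function $g \in L^\infty(\nu)$ (again a measurable selection), which satisfies $\E_{\nu_\xi}(g \mid \Zk_k(X,\nu_\xi)) = 0$ a.e., hence $\norm{g}_{U^{k+1}(\nu)}^{2^{k+1}} = \int_\Omega \norm{g}_{U^{k+1}(\nu_\xi)}^{2^{k+1}}\, d\mu(\xi) = 0$, so $g$ is $\cZ^\perp$, i.e.\ $\E_\nu(g \mid \cZ) = 0$; but $g = f - \E_{\nu_\xi}(f\mid\cdots)$ and $f$ is $\cZ$-measurable, so $\E_\nu(g \mid \cZ) = f - \E_\nu(\E_{\nu_\bullet}(f \mid \Zk_k(X,\nu_\bullet)) \mid \cZ)$, and combined with the first inclusion this forces $g = 0$ a.e., a contradiction.

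Assembling the two inclusions, $\cZ$ and $\Zk_k(X,\nu_\xi,T)$ generate the same $\sigma$-algebra of $\nu_\xi$-measurable sets for $\mu$-a.e.\ $\xi$, which is exactly the assertion $\Zk_k(X,\nu_\xi,T) = (\Zk_k(\mathrm X),\nu_\xi,T)$. The main obstacle I anticipate is the bookkeeping in the disintegration of $\nu^{[k]}$: one has to be careful that the conditional expectation $\E(\cdot \mid \I^{[k]})$ really does decompose as ``first over the diagonal $\Omega$, then fiberwise invariant,'' which requires knowing that the diagonal $\Omega$-factor is contained in $\I^{[k]}$ and that, conditionally on it, the components $(\nu_\xi)^{[k]}$ have trivial-on-$\Omega$ further invariant structure — a standard but slightly delicate fact about ergodic decompositions of joinings. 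Everything downstream is then formal manipulation of conditional expectations plus one or two invocations of measurable selection to pass between ``fiberwise'' statements and genuine $L^\infty(\nu)$ functions.
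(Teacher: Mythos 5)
The paper does not actually prove this lemma; it simply cites \cite[Chapter 8, Proposition 18]{hkb}, and your argument is essentially a reconstruction of the standard proof given there: decompose the cube measures as $\nu^{[k]}=\int_\Omega(\nu_\xi)^{[k]}\,d\mu(\xi)$ by induction (using that the diagonal $\Omega$-factor sits inside $\I^{[k]}$ and that conditioning on $\I^{[k]}$ can be computed fiberwise), deduce $\norm{f}_{U^{k+1}(\nu)}^{2^{k+1}}=\int_\Omega\norm{f}_{U^{k+1}(\nu_\xi)}^{2^{k+1}}\,d\mu(\xi)$, and then translate this through the characterization \eqref{eq:Zk-characterization}. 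That core is correct, and you rightly identify the delicate point in the inductive step. The one place where your write-up is genuinely loose is the first inclusion $\Zk_k(X,\nu_\xi,T)\subseteq \Zk_k(\mathrm{X})$, which you justify by an unspecified measurable selection of a generating family; this can be avoided entirely. The seminorm identity shows that the two closed subspaces $\{f\in L^2(\nu):\E_\nu(f\mid \Zk_k(\mathrm{X}))=0\}$ and $\{f\in L^2(\nu):\E_{\nu_\xi}(f\mid \Zk_k(X,\nu_\xi,T))=0\ \text{for a.e.\ }\xi\}$ coincide (first on $L^\infty$, then by density). Taking orthocomplements, and noting that both $L^2(\Zk_k(\mathrm{X}))$ and the second complement are decomposable subspaces of the direct integral $L^2(\nu)=\int_\Omega L^2(\nu_\xi)\,d\mu(\xi)$ (the former because $\Zk_k(\mathrm{X})$ contains the invariant factor), equality of decomposable subspaces forces equality of their fibers for a.e.\ $\xi$, which is exactly the assertion of the lemma. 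This replaces both of your inclusions and the contradiction argument, and removes the only selection step whose justification was missing; with that repair your proof is complete and matches the cited one.
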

\begin{proof}
    See, e.g., \cite[Chapter 8, Proposition 18]{hkb}. 
\end{proof}

We will construct bundles of nilmanifolds over the invariant factor to geometrically represent Host--Kra factors of non-ergodic systems. 
The following definition formalizes this notion. 

\begin{definition}[Bundle of nilsystems]\label{Definition: Bundles of nilsystems}
    Let $(\Omega, \mu)$ be a standard Borel probability space. Choose:
\begin{enumerate}
    \item a measurable partition $(\Omega_n) _{n \geq 0}$ of $\Omega$;
    \item compactly generated nilpotent Lie groups $(N_n)_{n \geq 0}$ with discrete cocompact subgroups ($=$ lattices) $\Gamma_n \subset N_n$ for all $n \geq 0$;
    \item measurable maps $\phi_n: \Omega_n \rightarrow \Hom(\mathbb{Z}^d,N_n)$ such that for all $n \geq 0$ and $\mu$-almost all $\xi \in \Omega_n$ the $\mathbb{Z}^d$-action given by $\phi_n(\xi)$ on $(N_n/\Gamma_n, m_{N_n/\Gamma_n})$ is ergodic. 
\end{enumerate} 
Then the space 
\[
 X := \bigsqcup_{n \geq 0} \Omega_n \times N_n/\Gamma_n
\]
equipped with the probability measure
\[
\nu:=\sum_{n \geq 0} \mu_{\vert \Omega_n} \otimes m_{N_n/\Gamma_n}
\]
and the action defined for all $n \geq 0$, $t \in \mathbb{Z}^d$ and $(\xi,m) \in \Omega_n \times N_n/\Gamma_n$ by 
\[
 t \cdot (\xi,m) = (\xi, \phi_n(\xi)(t)\cdot m)
\]
is called a \emph{bundle of nilsystems (with base space $\Omega$)} and denoted by
\[
\bigsqcup_{n \geq 0} \Omega_n \rtimes_{\phi_n} N_n/\Gamma_n .
\]
We say that $X$ has \emph{step at most $k$} if $N_n$ is nilpotent of step at most $k$ for all $n \geq 0$. Finally, the obvious projection $X \rightarrow \Omega$ sends $\nu$ to $\mu$ and is called the \emph{bundle projection}.
\end{definition}

Next, we formalize our notion of non-ergodic vertical nilcharacter needed in Theorem \ref{t:vertical_nilcharacters}. 

For a nilpotent Lie group $N$, write
\[
N_1:=N,\qquad N_{r+1}:=[N,N_r].
\]
If $M=N/\Gamma$ is a nilmanifold of step at most $k$, then
\[
A_k(M):=N_k/(N_k\cap \Gamma)
\]
is a compact abelian Lie group. Since $N_{k+1}=\{e\}$, the group $N_k$ is central in $N$, and hence $A_k(M)$ acts on $M$ by left translations. If $N_k=\{e\}$, then $A_k(M)$ is understood to be the trivial group.

\begin{definition}[Vertical nilcharacters over the invariant factor]\label{def:vertical_nilcharacter_bundle}
Let
\[
\mathrm{Y}=\bigsqcup_{i\geq 0}\Omega_i\rtimes_{\alpha_i} N_i/\Gamma_i
\]
be a bundle of nilsystems of step at most $k$ over $(\Omega,\mu)$, in the sense of Definition~\ref{Definition: Bundles of nilsystems}. Put
\[
M_i:=N_i/\Gamma_i,\qquad A_i:=A_k(M_i)=(N_i)_k/((N_i)_k\cap \Gamma_i).
\]
A function $F\in L^2(Y)$ is called a \emph{$k$-vertical nilcharacter on $\mathrm Y$} if, for every $i\geq 0$, there exists a measurable map $\chi_i:\Omega_i\to \widehat{A_i}$ 
such that
\[
F(\xi,a\cdot m)=\chi_i(\xi)(a)\,F(\xi,m)
\]
for $\mu|_{\Omega_i}\otimes m_{A_i}\otimes m_{M_i}$-a.e.\ $(\xi,a,m)\in \Omega_i\times A_i\times M_i$. 

If $\mathrm X=(X,\nu,T)$ is a measure-preserving system, a function $f\in L^2(X)$ is called a \emph{non-ergodic $k$-vertical nilcharacter on $\mathrm X$} if there exist a factor map
$\pi:\mathrm X\to \mathrm Y$ onto a bundle $\mathrm{Y}$ of nilsystems of step at most $k$, and a $k$-vertical nilcharacter $F\in L^2(Y)$, such that $f=F\circ \pi$ in 
$L^2(X)$.   
\end{definition}

\begin{example}\label{rem:k_equals_one_vertical_nilcharacters}
Let
\[
\mathrm{Y}=\bigsqcup_{i\geq 0}\Omega_i\rtimes_{\alpha_i} M_i,
\qquad M_i=N_i/\Gamma_i,
\]
be a bundle of nilsystems of step at most $1$. Then each $N_i$ is abelian and
\[
A_1(M_i)=N_i/(N_i\cap \Gamma_i)=M_i.
\]
Thus a $1$-vertical nilcharacter on $\mathrm{Y}$ is a function
$F\in L^2(Y)$ such that, for every $i\geq 0$, there exists a measurable map $\chi_i:\Omega_i\to \widehat{M_i}$ 
with
\[
F(\xi,a\cdot m)=\chi_i(\xi)(a)F(\xi,m)
\]
for $\mu|_{\Omega_i}\otimes m_{M_i}\otimes m_{M_i}$-a.e.
$(\xi,a,m)\in\Omega_i\times M_i\times M_i$. Equivalently, for
$\mu$-a.e. $\xi\in\Omega_i$, the function $m\mapsto F(\xi,m)$ is a scalar
multiple of a character of the compact abelian Lie group $M_i$, where the
character is allowed to vary measurably with $\xi$.

In particular, if
\[
t\cdot(\xi,m)=(\xi,\alpha_i(\xi)(t)\cdot m),
\]
then
\[
F(t\cdot(\xi,m))
=
\lambda_t(\xi)F(\xi,m),
\qquad
\lambda_t(\xi):=\chi_i(\xi)\bigl(\alpha_i(\xi)(t)\Gamma_i\bigr).
\]
Thus, after pulling back to a system $\mathrm X$ through a factor map
$\pi\colon \mathrm X\to \mathrm Y$, a $1$-vertical nilcharacter is a relative eigenfunction over
the invariant factor $\Omega$, with invariant eigenvalue family
$(\lambda_t)_{t\in\Z^d}$. 

In the case $d=1$, after the normalization
$\E(|F|^2\mid\Omega)\in\{0,1\}$, this agrees with the notion of
non-ergodic eigenfunction used by Frantzikinakis and Host
\cite[Section~5.2]{fh}. 
\end{example}

\begin{remark}\label{rem:borelmod0}
In the arguments below, some partitions/maps are obtained only $\mu$-measurably (e.g.\ via the Jankov--von Neumann uniformization theorem).
Since $\Omega$ is standard Borel, after modifying on a $\mu$-null set one may arrange these to be Borel. We will freely work modulo null sets.
In particular, we will (after modifying on a $\mu$-null set) regard the kernel $\xi\mapsto \nu_\xi\in\Prob(X)$ as Borel.
\end{remark}

To perform our measurable selection argument, we need the following countability result for nilmanifolds.

\begin{theorem}\label{t:countability}
Up to isomorphism of nilmanifolds, there are only countably many pairs $(N,\Gamma)$ where $N$ 
is a compactly generated nilpotent Lie group and $\Gamma$ is a discrete cocompact subgroup.
\end{theorem}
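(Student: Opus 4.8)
The plan is to reduce the statement to the (known) countability of finitely generated nilpotent groups up to isomorphism, plus the control of a bounded amount of gluing data via Mal'cev rigidity. First I would record the relevant structure of $N$. The identity component $N^0$ is an open normal subgroup and a connected nilpotent Lie group, and $Q:=N/N^0$ is discrete; since $N$ is compactly generated and $N^0$ is open, the image of a compact generating set is a finite generating set of $Q$, so $Q$ is a finitely generated nilpotent group. Given a lattice $\Gamma\le N$, the subgroup $\Lambda:=\Gamma\cap N^0$ is discrete, and $N^0/\Lambda\cong \Gamma N^0/\Gamma$ is a closed subset (as $\Gamma N^0$ is open, hence closed, and is a union of $\Gamma$-cosets) of the compact space $N/\Gamma$, so $\Lambda$ is a lattice in $N^0$. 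Moreover $\overline{\Gamma}:=\Gamma N^0/N^0\le Q$ has finite index, because $N/\Gamma N^0\cong Q/\overline{\Gamma}$ is simultaneously discrete and compact. Finally $\Gamma$, being a cocompact lattice in the compactly generated group $N$, is itself finitely generated, hence a finitely generated nilpotent group.

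Second, I would count the connected part. Let $\pi\colon \tilde N\to N^0$ be the universal cover, a simply connected nilpotent Lie group with $D:=\ker\pi$ a discrete subgroup of the center $Z(\tilde N)$ (which is a vector group). Then $\tilde\Lambda:=\pi^{-1}(\Lambda)$ is cocompact in $\tilde N$, and it is discrete: its identity component maps continuously onto a connected subset of the discrete group $\Lambda$, hence lies in the discrete group $D$, hence is trivial. So $\tilde\Lambda$ is a lattice in $\tilde N$ containing $D$, and $D\subseteq \tilde\Lambda\cap Z(\tilde N)\subseteq Z(\tilde\Lambda)$, a finitely generated abelian group. By Mal'cev's theorems, $\tilde\Lambda$ is a finitely generated torsion-free nilpotent group and $\tilde N$ is its Mal'cev completion, so $\tilde\Lambda$ determines $\tilde N$ up to isomorphism. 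Since finitely generated nilpotent (indeed polycyclic) groups are finitely presented there are only countably many isomorphism types of such $\tilde\Lambda$, and a finitely generated abelian group has only countably many subgroups; hence the pairs $(N^0,\Lambda)=(\tilde N/D,\tilde\Lambda/D)$ fall into countably many isomorphism classes. The same finite-presentation argument gives only countably many possibilities up to isomorphism for the finitely generated nilpotent groups $\Gamma$ and $Q$.

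Third — and this is the crux — I would show that once the isomorphism types of $(N^0,\Lambda)$, of $\Gamma$, and of $Q$ are fixed, only countably many pairs $(N,\Gamma)$ remain. The topological group $N$, together with its inclusions of $N^0$ and $\Gamma$, is reconstructed from $N^0$, $Q$, a homomorphism from $Q$ into $\Aut(N^0)$ modulo inner automorphisms, and a $2$-cocycle valued in $N^0$ recording the extension and the placement of $\Gamma$. A priori these data range over uncountable sets, since $\Aut(N^0)$ can be positive-dimensional (e.g.\ $GL_n(\mathbb{R})$) and $H^2(Q,Z(N^0))$ can be uncountable. The essential point is that compatibility with the lattice forces rigidity: one may choose lifts to $N$ of a generating set of $Q$ so that the lifts of the finite-index subgroup $\overline{\Gamma}$ lie in $\Gamma$, and conjugation by such a lift preserves $\Lambda$, hence lies in $\Aut(N^0,\Lambda)$, a countable group — it maps to $\Aut(\Lambda)$ (countable) with kernel the automorphisms of $N^0$ fixing $\Lambda$ pointwise, and the latter is countable because, automorphisms being polynomial in Mal'cev coordinates on the universal cover and $\tilde\Lambda$ being Zariski dense, such an automorphism is pinned down by a homomorphism $\tilde\Lambda\to D$. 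The finitely many remaining lifts, representing the finite group $Q/\overline{\Gamma}$, have powers in $\Gamma N^0$ and hence act algebraically, preserving a lattice commensurable with $\Lambda$; and the cocycle values may be taken in a fixed finitely generated subgroup of $N^0$. Each constraint is countable, so the theorem follows. I expect the bulk of the work — and the main obstacle — to be exactly this last step: making the bookkeeping precise, in particular handling the genuine but finite indeterminacy that arises when $\Gamma N^0\neq N$.
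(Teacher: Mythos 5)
The paper itself gives no argument for this statement---its ``proof'' is a citation to \cite[Chapter 12, Theorem 28]{hkb}---so there is no in-paper proof to compare against, and your attempt must be judged on its own. Your first two steps are correct and standard: the verification that $\Lambda=\Gamma\cap N^0$ is a cocompact lattice in $N^0$, that $\overline{\Gamma}$ has finite index in $Q$, the count of the pairs $(N^0,\Lambda)$ via the simply connected cover, $D$, and Mal'cev rigidity, and the countability of $\Aut(N^0,\Lambda)$ via the homomorphism $\tilde\Lambda\to D$ are all fine.

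The gap is where you yourself locate it, in the third step, and it is genuine rather than mere bookkeeping. The assertion that a lift $g$ of an element of $Q\setminus\overline{\Gamma}$ ``acts algebraically, preserving a lattice commensurable with $\Lambda$'' because some power $g^m$ lies in $\Gamma N^0$ does not follow: writing $g^m=\gamma n$ with $\gamma\in\Gamma$, $n\in N^0$, one gets $c_{g^m}=c_\gamma\circ c_n$, and the inner automorphism $c_n$ of $N^0$ need not carry $\Lambda$ to a commensurable lattice (already in the Heisenberg group, $nH(\Z)n^{-1}\cap H(\Z)$ has infinite index in both for generic $n$), so no commensurability constraint on $c_g$ is obtained this way. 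What is actually controllable is the class of $c_g$ modulo inner automorphisms (since $g$ is canonical only modulo $N^0$ up to isomorphism of the pair), and the missing ingredient---which your sketch never invokes at this point---is the nilpotency of $N$: it forces $c_g$ to be a unipotent automorphism of $N^0$, and unipotent automorphisms have unique unipotent $m$-th roots, which is what can pin the outer class of $c_g$ down to countably many possibilities once the outer class of $c_{g^m}$ is known to lie in the countable image of $\Aut(N^0,\Lambda)$. Likewise, the claim that ``the cocycle values may be taken in a fixed finitely generated subgroup of $N^0$'' is unsupported; one would need something like a vanishing or countability statement for the degree-two cohomology of the finite group $Q/\overline{\Gamma}$ with coefficients built from the uniquely divisible group $N^0$. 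None of this refutes the strategy, but in the disconnected case ($\Gamma N^0\neq N$) these points are essentially the whole content of the theorem, and as written the argument does not establish them.
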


\begin{proof}
See, e.g., \cite[Chapter 12, Theorem 28]{hkb}. 
\end{proof}

Our measurable selection argument relies on the Jankov--von Neumann uniformization theorem:

\begin{theorem}\label{thm:JvN}
Let $A\subseteq U\times V$ be a Borel subset of a product of standard Borel spaces.
If every section $A_u:=\{v:(u,v)\in A\}$ is nonempty for $u$ in a measurable set $U_0\subseteq U$,
then there exists a universally measurable map $s:U_0\to V$ with $(u,s(u))\in A$ for all $u\in U_0$.
\end{theorem}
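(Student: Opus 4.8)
This is the classical Jankov--von Neumann uniformization theorem, and the plan is to recall its standard proof via \emph{leftmost branches}. First I would dispose of the trivial case $A=\emptyset$ (then $U_0=\emptyset$ and $s$ is the empty map) and fix Polish topologies on $U$ and $V$ compatible with their Borel structures. Since $A$ is Borel it is analytic, so there is a continuous map $f=(f_U,f_V)\colon \N^\N\to U\times V$ with range exactly $A$, where $\N^\N$ denotes Baire space. Then $\pi_U(A)=f_U(\N^\N)$, and this set contains $U_0$ because every section $A_u$ with $u\in U_0$ is nonempty. Set
\[
G:=\{(u,z)\in U\times\N^\N : f_U(z)=u\},
\]
which is closed, being the preimage of the diagonal of $U\times U$ under the continuous map $(u,z)\mapsto(u,f_U(z))$. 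For $u\in\pi_U(A)$ the section $G_u=f_U^{-1}(\{u\})$ is a nonempty closed subset of $\N^\N$.

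Next I would select, for each $u\in\pi_U(A)$, the leftmost branch of $G_u$. Write $\N^{<\N}$ for the finite sequences of naturals, $\prec$ for ``is an initial segment of'', and $\sigma m$ for $\sigma\in\N^{<\N}$ followed by $m\in\N$. Define $z(u)\in\N^\N$ recursively: $z(u)(k)$ is the least $m\in\N$ such that $\sigma m\prec y$ for some $y\in G_u$, where $\sigma:=z(u){\restriction}k$. Since $G_u$ is closed and nonempty, the tree $T_u:=\{\tau\in\N^{<\N}:\tau\prec y\text{ for some }y\in G_u\}$ is nonempty and pruned, so the recursion never gets stuck and $z(u){\restriction}k\in T_u$ for all $k$; hence $z(u)\in[T_u]=G_u$. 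Putting $s(u):=f_V(z(u))$ we get $(u,s(u))=(f_U(z(u)),f_V(z(u)))\in A$ for every $u\in\pi_U(A)$, in particular for every $u\in U_0$.

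It then remains to check that $s=f_V\circ z$ is universally measurable on $U_0$. Let $\mathcal S$ be the $\sigma$-algebra on $U$ generated by the analytic sets; since analytic sets are universally measurable and the universally measurable sets form a $\sigma$-algebra, every member of $\mathcal S$ is universally measurable. For $\sigma\in\N^{<\N}$ the set $U_\sigma:=\{u:\sigma\in T_u\}=\pi_U\bigl(G\cap\{(u,y):\sigma\prec y\}\bigr)$ is the projection of a Borel set, hence analytic, hence in $\mathcal S$. An induction on $|\sigma|$ shows $\{u\in\pi_U(A): z(u){\restriction}|\sigma|=\sigma\}\in\mathcal S$: the case $|\sigma|=0$ reads $\pi_U(A)\in\mathcal S$ (true, as $\pi_U(A)$ is analytic), and for $\sigma$ of length $k$ and $m\in\N$,
\[
\{u: z(u){\restriction}(k{+}1)=\sigma m\}=\{u: z(u){\restriction}k=\sigma\}\cap U_{\sigma m}\setminus\bigcup_{j<m}U_{\sigma j},
\]
a Boolean combination of one $\mathcal S$-set (by the inductive hypothesis) with finitely many analytic sets. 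Since the cylinders $\{y:\sigma\prec y\}$ generate the Borel $\sigma$-algebra of $\N^\N$, this makes $z$ measurable from $\mathcal S$ to the Borel sets of $\N^\N$, so $s=f_V\circ z$ is $\mathcal S$-measurable and therefore universally measurable; restricting to the measurable set $U_0$ yields the desired selector.

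The only real content is the measurability induction in the last step; the representation of $A$ as a continuous image of Baire space, the closedness of $G$, and the existence of the leftmost branch are all routine. One could instead deduce the statement from the general selection theory for analytic sets, but for a merely universally measurable selector the hands-on leftmost-branch construction above is the most economical.
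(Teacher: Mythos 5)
Your proof is correct. The paper does not prove this statement itself; it simply cites Kechris \cite{ke}, and your argument is a faithful, self-contained rendition of exactly the standard proof given there: represent the analytic set as a continuous image of Baire space, pull back to a closed set of fibers, select the leftmost branch of the associated pruned tree, and verify $\sigma(\Sigma^1_1)$-measurability (hence universal measurability) by the induction on cylinder preimages. All steps, including the Boolean-combination identity in the measurability induction and the use of closedness to ensure the leftmost branch lands in the fiber, check out.
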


\begin{proof}
See, e.g., \cite[Chapter 18]{ke}. 
\end{proof}

Finally, we need the following disintegration result of Kallenberg. 

\begin{lemma}\label{t:kallenberg}
Let $V,W$ be standard Borel spaces.  There exists a Borel map
\[
\Phi:\ V\times \Prob(V\times W)\times \Prob(V)\ \longrightarrow\ \Prob(W)
\]
such that whenever $\lambda\in\Prob(V\times W)$ and $\nu\in\Prob(V)$ satisfy $\lambda(\cdot\times W)\ll \nu$,
one has the disintegration identity 
\[
\lambda(A\times B)=\int_A \Phi(s,\lambda,\nu)(B)\, d\nu(s)
\qquad \text{for all Borel }A\subseteq V,\ B\subseteq W. 
\]
In particular, for fixed $\nu$, the map $(s,\lambda)\mapsto \Phi(s,\lambda,\nu)$ is jointly Borel.
\end{lemma}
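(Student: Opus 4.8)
The plan is to build the kernel $\Phi$ out of Radon--Nikodym derivatives, each written explicitly as a martingale limit of differentiation ratios, so that joint Borel measurability in the three arguments is manifest. Since the statement involves only Borel structures, I would first reduce to the case where $V$ and $W$ are compact metrizable and zero-dimensional (every standard Borel space is Borel isomorphic to a closed subset of $\{0,1\}^{\N}$), so that the clopen sets form a countable algebra generating the Borel $\sigma$-algebra. Fix such an algebra $\{B_1,B_2,\dots\}$ on $W$ and an increasing sequence of finite clopen partitions $\mathcal{Q}_1\preceq\mathcal{Q}_2\preceq\cdots$ of $V$ whose atoms generate the Borel $\sigma$-algebra; for $s\in V$ let $Q_n(s)$ denote the $\mathcal{Q}_n$-atom containing $s$.

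For a clopen $B\subseteq W$ I would then set
\[
g_B(s,\lambda,\nu):=\limsup_{n\to\infty}\ \frac{\lambda\bigl(Q_n(s)\times B\bigr)}{\nu\bigl(Q_n(s)\bigr)},\qquad 0/0:=0 .
\]
For each fixed $n$ this is a finite sum over the atoms of $\mathcal{Q}_n$ of terms of the form ``(indicator of a clopen set in $s$) times $\lambda(\cdot)/\nu(\cdot)$'', hence Borel in $(s,\lambda,\nu)$; so $g_B$ is Borel. When $\lambda(\cdot\times W)\ll\nu$ one has $\lambda(\cdot\times B)\ll\nu$, and the displayed ratios are, $\nu$-almost surely, the Doob martingale $\E_\nu\bigl(\tfrac{d\lambda(\cdot\times B)}{d\nu}\mid\sigma(\mathcal{Q}_n)\bigr)$; since $\bigvee_n\sigma(\mathcal{Q}_n)$ is the full Borel $\sigma$-algebra, martingale convergence gives $g_B(s,\lambda,\nu)=\tfrac{d\lambda(\cdot\times B)}{d\nu}(s)$ for $\nu$-a.e.\ $s$. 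Next let $G$ be the Borel set of triples for which $B\mapsto g_B(s,\lambda,\nu)$ is nonnegative, finitely additive on the clopen algebra of $W$, and finite on $W$. On $G$ this finite, finitely additive set function is automatically a premeasure, because a decreasing sequence of clopen sets with empty intersection is eventually empty by compactness; Carath\'eodory extension then produces a Borel measure $\Phi(s,\lambda,\nu)$ on $W$, and off $G$ I would put $\Phi(s,\lambda,\nu):=\delta_{w_0}$ for a fixed $w_0\in W$. A Dynkin ($\pi$--$\lambda$) argument shows that $(s,\lambda,\nu)\mapsto\Phi(s,\lambda,\nu)(B)$ is Borel for every Borel $B$, hence $\Phi$ is Borel into the space of (finite) measures on $W$.

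Finally, fixing $\lambda,\nu$ with $\lambda(\cdot\times W)\ll\nu$, for $\nu$-a.e.\ $s$ we have $(s,\lambda,\nu)\in G$ and $\Phi(s,\lambda,\nu)(B)=\tfrac{d\lambda(\cdot\times B)}{d\nu}(s)$ for all clopen $B$, so $\int_A\Phi(s,\lambda,\nu)(B)\,d\nu(s)=\lambda(A\times B)$ for Borel $A$ and clopen $B$; for fixed $A$ both sides are finite measures in $B$ agreeing on a generating $\pi$-system, so the identity holds for all Borel $B$, and the last assertion of the lemma is then immediate. The main obstacle is exactly this joint measurability in $(\lambda,\nu)$ of the disintegration --- i.e.\ having a genuinely pointwise, jointly Borel formula for Radon--Nikodym derivatives --- which is why I would route the construction through the explicit martingale limits rather than through an abstract disintegration theorem. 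A minor point is that $\Phi(s,\lambda,\nu)(W)=\tfrac{d\lambda(\cdot\times W)}{d\nu}(s)$, which equals $1$ for $\nu$-a.e.\ $s$ exactly when $\lambda(\cdot\times W)=\nu$; under the bare hypothesis $\ll$ the kernel naturally takes values in finite measures, and this is the reading of $\Prob(W)$ that the disintegration identity forces.
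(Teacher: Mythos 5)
Your construction is correct, but note that the paper does not actually prove this lemma: it simply cites Kallenberg \cite{ka}, so you are supplying a proof where the paper supplies a reference. Your route---transfer to a compact zero-dimensional model, define candidate densities as $\limsup$'s of differentiation ratios $\lambda(Q_n(s)\times B)/\nu(Q_n(s))$ along a refining clopen dissection, identify them a.e.\ with Radon--Nikodym derivatives via martingale convergence, and then assemble a measure by Carath\'eodory extension on the clopen algebra (where finite additivity plus compactness gives the premeasure property for free) followed by a Dynkin-class argument for Borel measurability in $(s,\lambda,\nu)$---is essentially the standard proof of Kallenberg's Lemma~2.2, which likewise works with a fixed dissection system precisely to make the joint measurability in $(\lambda,\nu)$ manifest. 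All the steps check out: the reduction preserves the Borel structure on $\Prob(V)$ and $\Prob(V\times W)$, each finite-level ratio is Borel in the triple, the exceptional set where the martingale limit fails or finite additivity breaks is $\nu$-null for each admissible $(\lambda,\nu)$, and the $\pi$--$\lambda$ extension from clopen to Borel rectangles is routine. Your closing caveat is also a genuine (minor) observation about the statement itself: under the bare hypothesis $\lambda(\cdot\times W)\ll\nu$ the disintegration identity forces $\Phi(s,\lambda,\nu)(W)=\tfrac{d\lambda(\cdot\times W)}{d\nu}(s)$, so $\Phi$ naturally lands in finite measures rather than $\Prob(W)$ unless the first marginal of $\lambda$ equals $\nu$; in the paper's only application one has $(\pr_X)_*\lambda=\nu_\xi$ exactly, so the discrepancy is harmless there.
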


\begin{proof}
See \cite[Lemma 2.2]{ka}.
\end{proof}

\section{Proof of Theorem \ref{t:main}}

It suffices to prove Theorem~\ref{t:main} under the additional assumption that
\[
\mathrm{X}=\Zk_k(\mathrm{X}).
\]
Indeed, in the general case one applies the result below to the factor $\Zk_k(\mathrm{X})$ and then pulls back the resulting inverse system to $\mathrm{X}$. The invariant factor of $\Zk_k(\mathrm{X})$ is the same as the invariant factor of $\mathrm{X}$, since the invariant factor is contained in every Host--Kra factor.

We fix the following setting throughout the proof of this reduced case.  
\begin{itemize}
    \item $k\geq 1$. 
    \item A measure-preserving $\Z^d$-system $\mathrm{X}=(X,\nu,T)$ such that $\mathrm{X} =\Zk_k(\mathrm{X})$.  
    \item Let $\nu=\int_\Omega \nu_\xi\, d\mu(\xi)$ be the ergodic decomposition of $\mathrm{X}$. 
    \item A countable set $\mathcal D=\{f_r:r\in\mathbb N\}\subset L^\infty(\nu)$ dense in $L^2(\nu)$. 
    \item An enumeration of representatives of $k$-step nilmanifolds (using Theorem \ref{t:countability}):
\[
(N_j,\Gamma_j)_{j\in\mathbb N}, \qquad M_j:=N_j/\Gamma_j.
\]
\item For each $j$, a compatible metric $d_j$ on $M_j$ and a countable set
$\mathcal U_j=\{u_{j,q}\}_{q\in\mathbb N}\subset C(M_j)$ dense in $L^2(m_{M_j})$. 
\item For each $j$, let $\Prob(X\times M_j)$ be the Polish space of Borel probability measures on $X\times M_j$ with the weak topology, and let 
\[
\mathsf Q:=\bigsqcup_{j\in\mathbb N}\Prob(X\times M_j)
\]
be the disjoint union standard Borel space of all these spaces of measures. 
\item For each $j$, let
\[
\mathsf P_j:=\Hom(\Z^d,N_j),
\]
be identified with the closed subset of $N_j^d$ consisting of commuting $d$-tuples (hence $\mathsf P_j$ is Polish), and let 
\[
\mathsf P:=\bigsqcup_{j\in\mathbb N}\mathsf P_j
\]
be the disjoint union standard Borel space of all such actions. 
\end{itemize}

\begin{lemma}\label{lem1}
For $\mu$-a.e.\ $\xi$ and for every $n\in\mathbb N$ there exist $j\in\mathbb N$,
an action $\alpha\in\mathsf P_j$, and a $k$-step nilfactor map
\[
\phi_{\xi,n}:(X,\nu_\xi)\to (M_j,m_{M_j})
\]
intertwining $T$ and $\alpha$, such that the action $\alpha$ on $(M_j,m_{M_j})$ is ergodic and such that for all $1\le r\le n$,
\[
\big\| f_r - \E_{\nu_\xi}(f_r \mid \phi_{\xi,n}^{-1}\cB(M_j))\big\|_{L^2(\nu_\xi)} < 2^{-(n+2)}.
\]
\end{lemma}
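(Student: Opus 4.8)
The plan is to apply the ergodic structure theorem (Theorem~\ref{t:hkz}) to each ergodic component $(X,\nu_\xi,T)$ separately. The key observation is that, by Lemma~\ref{lem:Zk_fiberwise} and the standing assumption $\mathrm{X}=\Zk_k(\mathrm{X})$, each component already coincides with its own $k$th Host--Kra factor. Concretely, I would fix a $\mu$-conull set $G\subseteq\Omega$ on which $(X,\nu_\xi,T)$ is ergodic and the conclusion of Lemma~\ref{lem:Zk_fiberwise} holds. For $\xi\in G$,
\[
\Zk_k(X,\nu_\xi,T)=(\Zk_k(\mathrm{X}),\nu_\xi,T)=(X,\nu_\xi,T),
\]
so the ergodic system $(X,\nu_\xi,T)$ equals $\Zk_k$ of itself. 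Since $G$ does not depend on $n$, it then remains to treat a fixed $\xi\in G$ and each $n\in\mathbb{N}$ in turn.

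By Theorem~\ref{t:hkz}, for $\xi\in G$ the ergodic system $(X,\nu_\xi,T)$ is measure-theoretically isomorphic to an inverse limit of $k$-step nilsystems $(W_m,\rho_m,S_m)$, $m\in\mathbb{N}$, with factor maps $p_m\colon X\to W_m$ satisfying $p_m^{-1}\cB(W_m)\subseteq p_{m+1}^{-1}\cB(W_{m+1})$. Writing $\cB_m$ for the $\nu_\xi$-completion of $p_m^{-1}\cB(W_m)$, being an inverse limit means exactly that $\bigvee_m\cB_m$ is the $\nu_\xi$-completed Borel $\sigma$-algebra of $X$. Hence, by $L^2$ martingale convergence, $\E_{\nu_\xi}(f_r\mid\cB_m)\to f_r$ in $L^2(\nu_\xi)$ for every $r$ (each $f_r$ being taken as a fixed bounded Borel function), and given $n$ we may choose $m=m(\xi,n)$ with
\[
\big\|f_r-\E_{\nu_\xi}(f_r\mid\cB_m)\big\|_{L^2(\nu_\xi)}<2^{-(n+2)}
\qquad(1\le r\le n).
\]

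Finally I would match $(W_m,\rho_m,S_m)$ with our fixed enumeration. As a $k$-step nilsystem it has the form $(N/\Gamma,m_{N/\Gamma},S_m)$ with $N$ a compactly generated nilpotent Lie group of step at most $k$, $\Gamma$ a lattice, and $S_m$ acting by translations; by Theorem~\ref{t:countability} there is $j\in\mathbb{N}$ together with an isomorphism of nilmanifolds $W_m\to M_j$ carrying $\rho_m$ to $m_{M_j}$ and $S_m$ to some $\alpha\in\Hom(\Z^d,N_j)=\mathsf P_j$. Composing $p_m$ with this isomorphism yields a factor map $\phi_{\xi,n}\colon(X,\nu_\xi)\to(M_j,m_{M_j})$ intertwining $T$ and $\alpha$, with $\phi_{\xi,n}^{-1}\cB(M_j)=\cB_m$ modulo $\nu_\xi$-null sets; this is the desired $k$-step nilfactor map, and the displayed inequality becomes the assertion of the lemma. (Being a factor of the ergodic system $(X,\nu_\xi,T)$, the action $\alpha$ on $(M_j,m_{M_j})$ is automatically ergodic, though this is not needed here.) I do not anticipate a genuine obstacle: the only points requiring care are that $G$ is fixed once and for all, independently of $n$, and that the reduction from the inverse limit to a single finite level is furnished by martingale convergence.
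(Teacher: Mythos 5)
Your proposal is correct and follows essentially the same route as the paper: reduce to the ergodic fiber via Lemma~\ref{lem:Zk_fiberwise}, invoke Theorem~\ref{t:hkz} to realize the fiber as an inverse limit of $k$-step nilsystems, pass to a finite level by martingale convergence, and identify that level with some $M_j$ via Theorem~\ref{t:countability}. The paper's proof is just a terser version of the same argument; your explicit remarks about fixing the conull set $G$ independently of $n$ and about martingale convergence are details the paper leaves implicit.
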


\begin{proof}
Fix $\xi$ such that $(X,\nu_\xi,T)$ is ergodic. By Lemma~\ref{lem:Zk_fiberwise} and Theorem \ref{t:hkz}, $(X,\nu_\xi,T)$ is an inverse limit of $k$-step nilsystems. Hence for the finite set
$\{f_1,\dots,f_n\}$ and tolerance $2^{-(n+2)}$, there is a $k$-step nilfactor $\phi_{\xi,n}$ such that each $f_r$
is within $2^{-(n+2)}$ in $L^2(\nu_\xi)$ of its conditional expectation onto that factor. By Theorem \ref{t:countability}
we may take the nilfactor isomorphic to some $M_j$ from our countable list, with action given by some $\alpha\in\mathsf P_j$.
Since this nilsystem is a factor of the ergodic system $(X,\nu_\xi,T)$, the action $\alpha$ on $(M_j,m_{M_j})$ is ergodic.
\end{proof}

\subsection*{Graph joinings and measurable reconstruction}

If $\phi:(X,\nu_\xi)\to (M_j,m_{M_j})$ is a factor map, define its \emph{graph joining}
\[
\lambda:=(\id_X,\phi)_*\nu_\xi=\int_X \delta_x\times \delta_{\phi(x)} \, d\nu_\xi \in \Prob(X\times M_j).
\]
Then $(\pr_X)_*\lambda=\nu_\xi$ and $(\pr_{M_j})_*\lambda=m_{M_j}$.

Fix $j\in\N$. For $(\xi,\lambda,x)$ with $(\pr_X)_*\lambda=\nu_\xi$, define the conditional measure
\[
\lambda_x := \Phi(x,\lambda,\nu_\xi)\in \Prob(M_j),
\]
where $\Phi$ is given by Lemma \ref{t:kallenberg}. By Remark \ref{rem:borelmod0}, the map $\xi\mapsto \nu_\xi$ is Borel, and hence
\[
(\xi,\lambda,x)\longmapsto \lambda_x
\]
is Borel on its natural domain $\{(\xi,\lambda,x):(\pr_X)_*\lambda=\nu_\xi\}$.

\begin{lemma}\label{lem2}
Fix $j\in\N$. Consider the Dirac embedding
\[
\delta:M_j\to \Prob(M_j),\qquad m\mapsto \delta_m.
\]
Then $\delta$ is continuous and injective, its image $\delta(M_j)\subseteq \Prob(M_j)$ is Borel, and the inverse
$\delta^{-1}:\delta(M_j)\to M_j$ is Borel.

Define the Borel set
\[
\mathcal G_j:=\{(\xi,\lambda,x): (\pr_X)_*\lambda=\nu_\xi \ \text{and}\  \lambda_x\in \delta(M_j)\}.
\]
On $\mathcal G_j$ define
\[
\Psi_j(\xi,\lambda,x):=\delta^{-1}(\lambda_x)\in M_j.
\]
Then $\Psi_j:\mathcal G_j\to M_j$ is Borel.

Moreover, for $(\xi,\lambda)$ with $(\pr_X)_*\lambda=\nu_\xi$ and $(\pr_{M_j})_*\lambda=m_{M_j}$, the following are equivalent:
\begin{itemize}
\item[(i)] $\lambda$ is supported on the graph of a measurable map $X\to M_j$;
\item[(ii)] $\lambda_x$ is a Dirac mass for $\nu_\xi$-a.e.\ $x$;
\item[(iii)] one has
\[
\int_X \left(\int_{M_j}\int_{M_j} d_j(m,m')\,d\lambda_x(m)\,d\lambda_x(m')\right)\,d\nu_\xi(x)=0.
\]
\end{itemize}
In particular, the set of pairs $(\xi,\lambda)$ with $(\pr_X)_*\lambda=\nu_\xi$, $(\pr_{M_j})_*\lambda=m_{M_j}$ and $\lambda$ a graph joining is Borel in $\Omega\times \Prob(X\times M_j)$.

Finally, whenever $(\xi,\lambda)$ is a graph joining, the map $x\mapsto \Psi_j(\xi,\lambda,x)$ is defined $\nu_\xi$-a.e.\ and satisfies
\[
(\id_X,\Psi_j(\xi,\lambda,\cdot))_*\nu_\xi=\lambda.
\]
We denote this a.e.\ defined map by $\phi_\lambda$.
\end{lemma}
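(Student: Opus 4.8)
The plan is to dispatch the five assertions of the lemma in order, leaning throughout on the Borel disintegration kernel $(\xi,\lambda,x)\mapsto\lambda_x$ established just above. Continuity of $\delta$ is immediate from testing against $C(M_j)$, injectivity holds since $C(M_j)$ separates points of the compact metric space $M_j$, and the Borel-ness of $\delta(M_j)$ together with that of $\delta^{-1}$ is the Lusin--Souslin theorem: a Borel injection between standard Borel spaces has Borel image and Borel inverse. For the second assertion, the natural domain $\{(\xi,\lambda,x):(\pr_X)_*\lambda=\nu_\xi\}$ is Borel, because $\xi\mapsto\nu_\xi$ is Borel (Remark~\ref{rem:borelmod0}) and $\lambda\mapsto(\pr_X)_*\lambda$ is continuous, so this set is the preimage of the diagonal of $\Prob(X)\times\Prob(X)$; then $\mathcal G_j$ is the preimage of the Borel set $\delta(M_j)$ under the Borel map $(\xi,\lambda,x)\mapsto\lambda_x$, and $\Psi_j=\delta^{-1}\circ(\,\cdot\mapsto\lambda_x\,)$ is a composition of Borel maps.

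The core is the equivalence (i)$\Leftrightarrow$(ii)$\Leftrightarrow$(iii), which I expect to require the most care. For (i)$\Rightarrow$(ii): if $\lambda=(\id_X,\psi)_*\nu_\xi$ with $\psi$ measurable, then $x\mapsto\delta_{\psi(x)}$ is a disintegration of $\lambda$ over $\nu_\xi$, so a.e.-uniqueness of disintegrations over a standard Borel base forces $\lambda_x=\delta_{\psi(x)}$ for $\nu_\xi$-a.e.\ $x$. The implication (ii)$\Rightarrow$(iii) is trivial, since for a Dirac mass one has $\int_{M_j}\int_{M_j}d_j(m,m')\,d\delta_p(m)\,d\delta_p(m')=d_j(p,p)=0$. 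For (iii)$\Rightarrow$(ii), nonnegativity of the integrand forces $\int_{M_j}\int_{M_j}d_j\,d\lambda_x\,d\lambda_x=0$, i.e.\ $d_j=0$ holds $(\lambda_x\otimes\lambda_x)$-a.e., for $\nu_\xi$-a.e.\ $x$; if such a $\lambda_x$ charged two distinct points $p\neq q$ of its support, then disjoint balls about them of radius $<d_j(p,q)/3$ would give a product set of positive $(\lambda_x\otimes\lambda_x)$-measure on which $d_j$ is bounded below, a contradiction, so $\lambda_x$ is a point mass. Finally, for (ii)$\Rightarrow$(i), set $\psi(x):=\Psi_j(\xi,\lambda,x)$ on the $\nu_\xi$-conull set $\{x:\lambda_x\in\delta(M_j)\}$ (and arbitrarily off it); this is Borel by the second assertion, and the disintegration identity evaluated on measurable rectangles gives $\lambda(A\times B)=\int_A\lambda_x(B)\,d\nu_\xi=\int_A\delta_{\psi(x)}(B)\,d\nu_\xi=\nu_\xi(A\cap\psi^{-1}(B))$, hence $\lambda=(\id_X,\psi)_*\nu_\xi$.

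For the penultimate assertion, the two marginal conditions cut out a Borel set, and since $(\xi,\lambda,x)\mapsto\lambda_x$ is Borel and $d_j$ is bounded and continuous, the map $(\xi,\lambda)\mapsto\int_X\bigl(\int_{M_j}\int_{M_j}d_j\,d\lambda_x\,d\lambda_x\bigr)\,d\nu_\xi(x)$ is Borel (integrating a bounded continuous function against a Borel family of measures is Borel, and then integrating a bounded Borel function against the Borel kernel $\nu_\xi$ is Borel); its vanishing set is Borel, and by the equivalence it is exactly the set of graph joinings. The last assertion restates what has been shown: for a graph joining $(\xi,\lambda)$, (iii)$\Rightarrow$(ii) puts $(\xi,\lambda,x)\in\mathcal G_j$ for $\nu_\xi$-a.e.\ $x$, so $\phi_\lambda:=\Psi_j(\xi,\lambda,\cdot)$ is $\nu_\xi$-a.e.\ defined and Borel, and $(\id_X,\phi_\lambda)_*\nu_\xi=\int_X\delta_x\times\lambda_x\,d\nu_\xi=\lambda$ by the disintegration identity. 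The only genuine obstacle here is measurability bookkeeping --- checking that each domain is honestly Borel (notably the $(\pr_X)_*\lambda=\nu_\xi$ constraint) and invoking a.e.-uniqueness of disintegrations over a standard Borel base correctly; the analytic content reduces to the elementary "$d_j=0$ a.e.\ implies Dirac" step.
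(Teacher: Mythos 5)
Your proposal is correct and follows essentially the same route as the paper: Lusin--Souslin for the Dirac embedding, composition of Borel maps for $\mathcal G_j$ and $\Psi_j$, and the metric characterization of Dirac masses for the equivalence of (i)--(iii). You simply fill in details the paper leaves terse (a.e.\ uniqueness of disintegrations for (i)$\Rightarrow$(ii), the two-point support argument for (iii)$\Rightarrow$(ii), and the Borelness of the integral functional for the ``in particular'' clause), all of which are sound.
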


\begin{proof}
The first paragraph follows from the Lusin--Souslin theorem (e.g.\ \cite[Theorem 15.1]{ke}) applied to the continuous injective map $\delta$.

Borelness of $\mathcal G_j$ and $\Psi_j$ follows because $(\xi,\lambda,x)\mapsto \lambda_x$ is Borel on $\{(\pr_X)_*\lambda=\nu_\xi\}$, the set $\delta(M_j)$ is Borel in $\Prob(M_j)$, and $\delta^{-1}$ is Borel on its domain.

For the equivalences: (i)$\Leftrightarrow$(ii) is the definition of a graph joining in terms of conditional measures. (ii)$\Leftrightarrow$(iii) uses the metric characterization of Dirac masses: for any probability measure $\eta$ on a metric space, $\eta$ is a Dirac mass if and only if $\int\!\!\int d(m,m')\,d\eta(m)\,d\eta(m')=0$. Apply this to $\eta=\lambda_x$ for $\nu_\xi$-a.e.\ $x$ and integrate over $x$.

For the final statement, if $\lambda_x=\delta_{\psi(x)}$ $\nu_\xi$-a.e., then on the $\nu_\xi$-conull set where $\lambda_x\in\delta(M_j)$ we have $\psi(x)=\delta^{-1}(\lambda_x)=\Psi_j(\xi,\lambda,x)$, hence
\[
(\id_X,\Psi_j(\xi,\lambda,\cdot))_*\nu_\xi=\lambda.
\]
\end{proof}

\subsection*{The selection relation}

Fix $n\in\mathbb N$ and define a relation $R_n\subseteq \Omega\times \mathsf P\times \mathsf Q$ by
\[
(\xi;\ (j,\alpha),\ \lambda)\in R_n
\]
if and only if all of the following hold:
\begin{itemize}
\item[(R1)]  $\lambda\in\Prob(X\times M_j)$ has marginals $(\pr_X)_*\lambda=\nu_\xi$, $(\pr_{M_j})_*\lambda=m_{M_j}$,
and $\lambda$ is a graph joining.
\item[(R2)] For every $t\in\Z^d$,
\[
(T^t\times \alpha(t))_*\lambda=\lambda.
\]
\item[(R3)]  For each $1\le r\le n$ there exists $q\in \N$ such that
\[
\int_{X\times M_j} \bigl|f_r(x)-u_{j,q}(m)\bigr|^2\,d\lambda(x,m) < 2^{-2n}.
\]
\item[(R4)] The action $\alpha$ on $(M_j,m_{M_j})$ is ergodic.
\end{itemize}

\begin{lemma}\label{lem:Rn_borel}
For each $n\in\mathbb N$, the set $R_n$ is a Borel subset of $\Omega \times \mathsf P \times \mathsf Q$. 
\end{lemma}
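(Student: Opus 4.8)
The plan is to write $R_n$ as a countable disjoint union over $j\in\N$ of sets living in a single ``$j$-slot'', and to exhibit each such set as a finite intersection of Borel conditions coming from (R1), (R2), (R3). First I would note that $\mathsf P=\bigsqcup_{j}\mathsf P_j$ and $\mathsf Q=\bigsqcup_{j}\Prob(X\times M_j)$ are disjoint unions of clopen pieces, and that condition (R1) forces $\lambda$ to lie in the piece $\Prob(X\times M_j)$ with the very same $j$ as the chosen element $(j,\alpha)\in\mathsf P$. Hence $R_n=\bigsqcup_{j\in\N}R_n^{(j)}$ with $R_n^{(j)}\subseteq\Omega\times\mathsf P_j\times\Prob(X\times M_j)$, and since a countable union of Borel sets is Borel it suffices to fix $j$ and prove $R_n^{(j)}$ Borel; write $M:=M_j$ and $N:=N_j$.

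For (R1): this condition describes exactly the set of pairs $(\xi,\lambda)$ with $(\pr_X)_*\lambda=\nu_\xi$, $(\pr_{M})_*\lambda=m_{M}$ and $\lambda$ a graph joining, which is Borel in $\Omega\times\Prob(X\times M)$ by the ``in particular'' clause of Lemma \ref{lem2}. (Even the marginal constraints alone cut out a Borel set, since $\lambda\mapsto(\pr_X)_*\lambda$ and $\lambda\mapsto(\pr_{M})_*\lambda$ are weakly continuous, $\xi\mapsto\nu_\xi$ is Borel by Remark \ref{rem:borelmod0}, and the diagonal of the metrizable space $\Prob(X)$ is closed.)

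For (R2): fix $t\in\Z^d$. The evaluation $\alpha\mapsto\alpha(t)$ is continuous $\mathsf P_j\to N$ (a monomial map in the Lie-group coordinates, as $\mathsf P_j$ is the space of commuting $d$-tuples in $N$), the left action $N\times M\to M$ is continuous, and $T^t\colon X\to X$ is Borel for the fixed Polish topology on $X$; hence $(\alpha,(x,m))\mapsto(T^t x,\alpha(t)\cdot m)$ is a Borel map $\mathsf P_j\times(X\times M)\to X\times M$. I would then invoke the standard fact that for a jointly Borel map $F\colon S\times Z\to Z'$ between standard Borel spaces, $(s,\eta)\mapsto F(s,\cdot)_*\eta$ is Borel $S\times\Prob(Z)\to\Prob(Z')$ --- this follows, via $\int g\,d(F(s,\cdot)_*\eta)=\int (g\circ F(s,\cdot))\,d\eta$ for bounded continuous $g$ and a monotone-class argument, from the equally standard fact that $(s,\eta)\mapsto\int h(s,z)\,d\eta(z)$ is Borel for every bounded Borel $h$ on $S\times Z$ (see, e.g., \cite{ke}). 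Taking $S=\mathsf P_j$ and $Z=Z'=X\times M$ yields that $(\alpha,\lambda)\mapsto(T^t\times\alpha(t))_*\lambda$ is Borel, so $\{(\alpha,\lambda)\colon(T^t\times\alpha(t))_*\lambda=\lambda\}$ is Borel (a closed diagonal again), and intersecting over the countable set $t\in\Z^d$ shows (R2) is Borel.

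For (R3): this involves only $\lambda$. Fix $r\le n$, $q\in\N$, and a bounded Borel representative of $f_r$; then $(x,m)\mapsto|f_r(x)-u_{j,q}(m)|^2$ is bounded Borel, so $\lambda\mapsto\int_{X\times M}|f_r(x)-u_{j,q}(m)|^2\,d\lambda(x,m)$ is Borel by the integration fact above (with no parameter), and $\{\lambda\colon\int|f_r-u_{j,q}|^2\,d\lambda<2^{-2n}\}$ is Borel; the countable union over $q$ and the finite intersection over $1\le r\le n$ then show (R3) is Borel. Therefore $R_n^{(j)}$ is a finite intersection of Borel sets, hence Borel, and $R_n=\bigsqcup_{j}R_n^{(j)}$ is Borel. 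The one point that needs genuine care rather than bookkeeping is the joint measurability of the pushforward maps entering (R2): one must not assume that $T^t$ is continuous --- it is merely Borel --- but the monotone-class argument uses only Borel measurability, so this causes no trouble.
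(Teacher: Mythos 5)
Your proposal is correct and follows essentially the same route as the paper: reduce to a fixed component $j$, then verify (R1) via the continuity of the marginal maps, the Borelness of $\xi\mapsto\nu_\xi$, and the graph-joining clause of Lemma~\ref{lem2}; verify (R2) via the continuity of evaluation and the action map, the Borelness of the induced pushforward, and a countable intersection over $t\in\Z^d$; and verify (R3) via Borelness of $\lambda\mapsto\int\varphi\,d\lambda$ for bounded Borel $\varphi$, a countable union over $q$, and a finite intersection over $r$. The only differences are presentational — you make the disjoint-union decomposition over $j$ explicit and spell out the monotone-class justification for joint Borelness of $(\alpha,\lambda)\mapsto(T^t\times\alpha(t))_*\lambda$, which the paper takes as standard.
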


\begin{proof}
\noindent\emph{Borelness of (R2).}
Fix $t\in\Z^d$. For each $j$, the evaluation map
\[
\mathsf P_j\to N_j,\qquad \alpha\mapsto \alpha(t)
\]
is continuous (indeed, $\mathsf P_j$ is a closed subset of $N_j^d$ and $\alpha(t)$ is a word in the $d$ generators).
The action map $N_j\times M_j\to M_j$ is continuous, hence the map
\[
\Xi_{t,j}: \mathsf P_j\times (X\times M_j)\to X\times M_j,\qquad
(\alpha,(x,m))\mapsto (T^t x,\ \alpha(t)\cdot m)
\]
is Borel. Therefore the induced pushforward map
\[
(\alpha,\lambda)\ \longmapsto\ (\Xi_{t,j}(\alpha,\cdot))_*\lambda = (T^t\times \alpha(t))_*\lambda
\]
from $\mathsf P_j\times \Prob(X\times M_j)$ to $\Prob(X\times M_j)$ is Borel.
Consequently, the set
\[
A_{t}:=\bigl\{(\xi,(j,\alpha),\lambda):\ (T^t\times \alpha(t))_*\lambda=\lambda\bigr\}
\]
is Borel, since it is the preimage of the diagonal under
\[
(\xi,(j,\alpha),\lambda)\mapsto\bigl((T^t\times \alpha(t))_*\lambda,\lambda\bigr).
\]
Since $\Z^d$ is countable, $\bigcap_{t\in\Z^d}A_t$ is Borel, proving that (R2) is Borel.

\smallskip
\noindent\emph{Borelness of (R3).}
Fix $1\le r\le n$ and $q\in\N$. On the component indexed by $j$ define
\[
I_{r,q}(\xi,(j,\alpha),\lambda)
:=\int_{X\times M_j} \bigl|f_r(x)-u_{j,q}(m)\bigr|^2\,d\lambda(x,m).
\]
Because $(x,m)\mapsto |f_r(x)-u_{j,q}(m)|^2$ is bounded Borel and $\lambda\mapsto \int \varphi\,d\lambda$ is Borel for bounded Borel $\varphi$,
the map $I_{r,q}$ is Borel. Hence
\[
B_{r,q}:=\bigl\{(\xi,(j,\alpha),\lambda):\ I_{r,q}(\xi,(j,\alpha),\lambda)<2^{-2n}\bigr\}
\]
is Borel. Now the clause (R3) for a fixed $r$ is $\exists q\in\N$ with $I_{r,q}<2^{-2n}$, i.e.
\[
B_r:=\bigcup_{q\in\N}B_{r,q},
\]
a countable union of Borel sets, hence Borel. Intersecting over $r=1,\dots,n$ shows (R3) is Borel.

\smallskip
\noindent\emph{Borelness of (R4).}
Let
\[
F_N:=\{-N,\dots,N\}^d\subseteq \Z^d.
\]
For $u\in C(M_j)$ and $\alpha\in\mathsf P_j$, define
\[
A_N^\alpha u(m):=\frac{1}{|F_N|}\sum_{t\in F_N}u(\alpha(t)\cdot m).
\]
By the mean ergodic theorem, the action $\alpha$ is ergodic if and only if, for every $q\in\N$,
\[
\lim_{N\to\infty}
\int_{M_j}\left|A_N^\alpha u_{j,q}(m)-\int_{M_j}u_{j,q}\,dm_{M_j}\right|^2\,dm_{M_j}(m)=0.
\]
This equivalence uses that $\mathcal U_j$ is dense in $L^2(m_{M_j})$. For fixed $q$ and $N$, the displayed integral is a Borel, indeed continuous, function of $\alpha$. Therefore the set of ergodic $\alpha\in\mathsf P_j$ is Borel, being
\[
\bigcap_{q\in\N}\bigcap_{\ell\in\N}
\bigcup_{N_0\in\N}\bigcap_{N\geq N_0}
\left\{
\alpha:
\int_{M_j}\left|A_N^\alpha u_{j,q}-\int_{M_j}u_{j,q}\,dm_{M_j}\right|^2\,dm_{M_j}<\frac{1}{\ell}
\right\}.
\]
Hence (R4) is Borel.

\smallskip
\noindent\emph{Borelness of (R1).}
Fix $j$. The marginal maps
\[
\lambda\mapsto (\pr_X)_*\lambda \in \Prob(X),
\qquad
\lambda\mapsto (\pr_{M_j})_*\lambda \in \Prob(M_j)
\]
are continuous (hence Borel) for weak topologies. Since $\xi\mapsto \nu_\xi\in\Prob(X)$ is Borel (Remark~\ref{rem:borelmod0}),
the set of $(\xi,\lambda)$ with $(\pr_X)_*\lambda=\nu_\xi$ is Borel, and the set with $(\pr_{M_j})_*\lambda=m_{M_j}$ is also Borel.
Finally, the graph-joining clause is Borel by Lemma~\ref{lem2}. Hence (R1) is Borel. 
\end{proof}

\begin{lemma}\label{lem4}
$(R_n)_\xi$ is nonempty for $\mu$-a.e.\ $\xi$.
\end{lemma}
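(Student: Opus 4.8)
The plan is to exhibit, for $\mu$-a.e.\ $\xi$, a single point of $(R_n)_\xi$ built from the nilfactor produced by Lemma~\ref{lem1}. Discard a $\mu$-null set of $\xi$ so that for each remaining $\xi$ the system $(X,\nu_\xi,T)$ is ergodic, $\nu_\xi$ is $T$-invariant, $\|f_r\|_{L^2(\nu_\xi)}<\infty$ for every $r$, and the conclusion of Lemma~\ref{lem1} holds for our fixed $n$; this yields $j\in\N$, an action $\alpha\in\mathsf P_j$, and a $k$-step nilfactor map $\phi:=\phi_{\xi,n}\colon (X,\nu_\xi)\to(M_j,m_{M_j})$ intertwining $T$ with $\alpha$ and satisfying $\|f_r-\E_{\nu_\xi}(f_r\mid\phi^{-1}\cB(M_j))\|_{L^2(\nu_\xi)}<2^{-(n+2)}$ for $1\le r\le n$. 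I then claim that $\bigl(\xi;(j,\alpha),\lambda\bigr)\in R_n$, where $\lambda:=(\id_X,\phi)_*\nu_\xi\in\Prob(X\times M_j)$ is the graph joining of $\phi$; since $(j,\alpha)$ lies in the $j$-th copy inside $\mathsf P$ and $\lambda$ in the $j$-th copy inside $\mathsf Q$, this witnesses $(R_n)_\xi\neq\varnothing$.

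Conditions (R1) and (R2) are quick. As $\phi$ is a factor map, $\phi_*\nu_\xi=m_{M_j}$, so by the marginal identities for graph joinings recorded just before Lemma~\ref{lem2} we get $(\pr_X)_*\lambda=\nu_\xi$ and $(\pr_{M_j})_*\lambda=m_{M_j}$; and $\lambda$ is a graph joining by construction, which gives (R1). For (R2), fix $t\in\Z^d$; the intertwining relation gives $\phi(T^tx)=\alpha(t)\cdot\phi(x)$ for $\nu_\xi$-a.e.\ $x$, so for every bounded continuous $\varphi$ on $X\times M_j$,
\[
\int\varphi\,d\bigl((T^t\times\alpha(t))_*\lambda\bigr)=\int_X\varphi\bigl(T^tx,\alpha(t)\cdot\phi(x)\bigr)\,d\nu_\xi(x)=\int_X\varphi\bigl(T^tx,\phi(T^tx)\bigr)\,d\nu_\xi(x)=\int\varphi\,d\lambda,
\]
the last equality by $T$-invariance of $\nu_\xi$; hence $(T^t\times\alpha(t))_*\lambda=\lambda$ for all $t$, giving (R2).

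The substance is in (R3): one must convert the estimate of Lemma~\ref{lem1}, which is phrased through the conditional expectation onto the abstract $\sigma$-algebra $\phi^{-1}\cB(M_j)$, into the concrete approximation of $f_r$ in $L^2(\lambda)$ by a single continuous function $u_{j,q}\in\mathcal U_j$. Write $\E_{\nu_\xi}(f_r\mid\phi^{-1}\cB(M_j))=g_r\circ\phi$ for some $g_r\in L^2(m_{M_j})$ (possible because $\phi_*\nu_\xi=m_{M_j}$); then $\|g_r\|_{L^2(m_{M_j})}=\|\E_{\nu_\xi}(f_r\mid\phi^{-1}\cB(M_j))\|_{L^2(\nu_\xi)}\le\|f_r\|_{L^2(\nu_\xi)}<\infty$, so density of the $u_{j,q}$ in $L^2(m_{M_j})$ lets us pick $q=q(r)$ with $\|g_r-u_{j,q}\|_{L^2(m_{M_j})}<2^{-(n+2)}$. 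For a graph joining one has $\int_{X\times M_j}|f_r(x)-u(m)|^2\,d\lambda=\|f_r-u\circ\phi\|_{L^2(\nu_\xi)}^2$ for every $u\in C(M_j)$, and $\|u_{j,q}\circ\phi-g_r\circ\phi\|_{L^2(\nu_\xi)}=\|u_{j,q}-g_r\|_{L^2(m_{M_j})}$, so the triangle inequality gives
\[
\|f_r-u_{j,q}\circ\phi\|_{L^2(\nu_\xi)}\le\|f_r-g_r\circ\phi\|_{L^2(\nu_\xi)}+\|g_r-u_{j,q}\|_{L^2(m_{M_j})}<2^{-(n+2)}+2^{-(n+2)}=2^{-(n+1)},
\]
hence $\int_{X\times M_j}|f_r(x)-u_{j,q}(m)|^2\,d\lambda<2^{-2(n+1)}<2^{-2n}$, which is (R3). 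Thus $\bigl(\xi;(j,\alpha),\lambda\bigr)\in R_n$ for all $\xi$ off a $\mu$-null set.

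There is no deep obstacle here — the statement is a verification — and the only point requiring genuine care is the conversion in (R3) just described, passing from ``$L^2$-close to its conditional expectation onto $\phi^{-1}\cB(M_j)$'' to ``$L^2$-close to $u_{j,q}\circ\phi$'' for an explicit $u_{j,q}$; the comfortable slack between the tolerance $2^{-(n+2)}$ supplied by Lemma~\ref{lem1} and the tolerance $2^{-2n}$ demanded in (R3) is exactly what absorbs the additional approximation of $g_r$ by $\mathcal U_j$. (A minor bookkeeping remark: to invoke density of $\mathcal U_j$ for a $g_r$ whose $L^2$-norm need not be $\le 1$ it is cleanest to take each $\mathcal U_j$ to be a countable $L^2(m_{M_j})$-dense subset of $C(M_j)$ rather than of the unit ball only; since $C(M_j)$ is $L^2$-dense this costs nothing.)
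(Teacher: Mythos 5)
Your proposal is correct and follows essentially the same route as the paper: take the graph joining of the nilfactor from Lemma~\ref{lem1}, check (R1)--(R2) directly, and handle (R3) by writing the conditional expectation as $g_r\circ\phi$ and approximating $g_r$ by some $u_{j,q}$, with the same tolerance bookkeeping. Your parenthetical remark about $\mathcal U_j$ needing to be dense in all of $L^2(m_{M_j})$ rather than just its unit ball is a fair minor observation, but it does not alter the argument.
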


\begin{proof}
Fix $\xi$ such that $(X,\nu_\xi, T)$ is ergodic. By Lemma~\ref{lem1}, there is a $k$-step nilfactor
$\phi:(X,\nu_\xi)\to(M_j,m_{M_j})$ with action $\alpha\in\mathsf P_j$ such that each $f_r$ ($r\le n$) is within $2^{-(n+2)}$ in $L^2(\nu_\xi)$
of $\E_{\nu_\xi}(f_r\mid \phi^{-1}\cB(M_j))$, and such that $\alpha$ is ergodic on $(M_j,m_{M_j})$.

Let $\lambda=(\id,\phi)_*\nu_\xi$. Then (R1), (R2), and (R4) hold.

For (R3): for each $r\le n$, since $\E_{\nu_\xi}(f_r\mid \phi^{-1}\cB(M_j))$ is of the form $g_r\circ\phi$ with $g_r\in L^2(M_j)$,
we have
\[
\norm{f_r-g_r\circ\phi}_{L^2(\nu_\xi)}<2^{-(n+2)}.
\]
Approximate $g_r$ in $L^2(M_j)$ by some $u_{j,q}$ from the dense set $\mathcal U_j$ so that
\[
\norm{g_r-u_{j,q}}_{L^2(M_j)}<2^{-(n+2)}.
\]
Then
\[
\norm{f_r-u_{j,q}\circ\phi}_{L^2(\nu_\xi)}
\le \norm{f_r-g_r\circ\phi}_{L^2(\nu_\xi)}+\norm{g_r-u_{j,q}}_{L^2(M_j)}
<2^{-(n+1)},
\]
and hence
\[
\int_{X\times M_j} \bigl|f_r(x)-u_{j,q}(m)\bigr|^2\,d\lambda(x,m)
=
\norm{f_r-u_{j,q}\circ\phi}_{L^2(\nu_\xi)}^2
<2^{-2n}.
\]
Thus (R3) holds for suitable choices of $q$ depending on $r$. Hence $(\xi;(j,\alpha),\lambda)\in R_n$.
\end{proof}

\subsection*{Constructing the bundle of nilsystems}

We can apply Theorem~\ref{thm:JvN} to $R_n$. After discarding a $\mu$-null set,
obtain a $\mu$-measurable selector
\[
s_n(\xi)=\bigl((j_n(\xi),\alpha_n(\xi)),\lambda_{n,\xi}\bigr)
\]
with $(\xi;s_n(\xi))\in R_n$ for $\mu$-a.e.\ $\xi$.

Let $\Omega_{n,j}:=\{\xi:j_n(\xi)=j\}$.
Define
\[
Y_n:=\bigsqcup_{j\in\N}(\Omega_{n,j}\times M_j),
\qquad
\lambda_n:=\sum_{j\in\N}\mu|_{\Omega_{n,j}}\otimes m_{M_j},
\]
and define the $\Z^d$-action on $Y_n$ by
\[
t\cdot(\xi,m):=(\xi,\alpha_n(\xi)(t)\cdot m).
\]
By (R4), this is a bundle of $k$-step nilsystems over $\Omega$ in the sense of Definition~\ref{Definition: Bundles of nilsystems}.

Define the factor map $\pi_n:X\to Y_n$ by
\[
\pi_n(x):=\bigl(\pi_0(x),\,\phi_{n,\pi_0(x)}(x)\bigr),
\]
where $\phi_{n,\xi}:=\phi_{\lambda_{n,\xi}}$ is extracted from $\lambda_{n,\xi}$ via Lemma~\ref{lem2} applied on the component $M_{j_n(\xi)}$.

To make $\pi_n$ everywhere-defined, fix for each $j$ a basepoint $m_{j,0}\in M_j$ and define $\phi_{n,\xi}(x):=m_{j,0}$
on the $\nu_\xi$-null set where $\phi_{\lambda_{n,\xi}}$ is not defined; this does not affect any of the measure-theoretic identities below.

For each $n$, the map $\pi_n$ is measurable, satisfies
\[
\pi_n(T^t x)=t\cdot \pi_n(x)\quad\text{for }\nu\text{-a.e.\ }x,\ \forall t\in\Z^d,
\]
and $(\pi_n)_*\nu=\lambda_n$.  In particular $(Y_n,\lambda_n)$ is a factor of $(X,\nu)$ and the bundle projection
$Y_n\to\Omega$ equals $\pi_0$.

Indeed, $\xi\mapsto \lambda_{n,\xi}$ is $\mu$-measurable by construction, and Lemma~\ref{lem2} provides a jointly measurable realization
$(\xi,\lambda,x)\mapsto \phi_\lambda(x)$ on graph joinings. Hence $x\mapsto \pi_n(x)$ is measurable.

For $\mu$-a.e.\ $\xi$, (R2) says $(T^t\times\alpha_n(\xi)(t))_*\lambda_{n,\xi}=\lambda_{n,\xi}$ for all $t$.
Since $\lambda_{n,\xi}$ is a graph joining, this is equivalent to $\phi_{n,\xi}(T^t x)=\alpha_n(\xi)(t)\phi_{n,\xi}(x)$ for $\nu_\xi$-a.e.\ $x$.
Integrating over $\xi$ yields the stated equivariance $\nu$-a.e.\ on $X$.

Disintegrate
\[
(\pi_n)_*\nu=\int_\Omega (\pi_n)_*\nu_\xi\,d\mu(\xi).
\]
For $\mu$-a.e.\ $\xi$, (R1) gives $(\pr_{M_{j_n(\xi)}})_*\lambda_{n,\xi}=m_{M_{j_n(\xi)}}$ and $(\pr_X)_*\lambda_{n,\xi}=\nu_\xi$,
so $(\pi_n)_*\nu_\xi=\delta_\xi\otimes m_{M_{j_n(\xi)}}$. Integrating gives $\lambda_n$.
Finally, the bundle projection composed with $\pi_n$ is $\pi_0$ by definition.

The factors $Y_n$ need not be increasing in $n$, so we take finite joins.

Let $\cA_n:=\sigma(\pi_1,\dots,\pi_n)$, and let $Z_n$ be the corresponding factor of $X$.
Equivalently, $Z_n$ is the factor induced by
\[
\Pi_n:X\to \prod_{i=1}^n Y_i,\qquad
\Pi_n(x):=(\pi_1(x),\dots,\pi_n(x)).
\]

\begin{lemma}\label{lem:join_bundle}
For each $n$, the factor $Z_n$ is a bundle of $k$-step nilsystems over $\Omega$ with bundle projection induced by $\pi_0$.
\end{lemma}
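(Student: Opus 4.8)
The plan is to realize $Z_n$ as a bundle of nilsystems by taking, over each fiber $\xi$, the \emph{joint nilfactor} generated by the $n$ individual nilfactors $\phi_{i,\xi}\colon (X,\nu_\xi)\to M_{j_i(\xi)}$ for $i=1,\dots,n$. Concretely, for $\mu$-a.e.\ $\xi$ the map $(\phi_{1,\xi},\dots,\phi_{n,\xi})$ sends $(X,\nu_\xi,T)$ onto a sub-$\Z^d$-system of the product nilsystem $\prod_{i=1}^n (M_{j_i(\xi)},\alpha_i(\xi))$; the image is the closed orbit-support, which is itself a $k$-step nilsystem (a closed $\Z^d$-invariant subset of a product of nilmanifolds carrying an ergodic action, equipped with its unique invariant measure, is a nilmanifold of step at most $k$ — this is the standard fact that factors and joinings of nilsystems are nilsystems, cf.\ the structure theory underlying Theorem~\ref{t:hkz}). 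Denote this fiber nilsystem by $(M'_\xi,\alpha'_\xi)$; it is $k$-step, and by Theorem~\ref{t:countability} it is isomorphic to some $M_{j'}$ from our countable list.

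First I would make this fiberwise construction measurable in $\xi$. The data $\xi\mapsto (j_i(\xi),\alpha_i(\xi),\lambda_{i,\xi})$ are $\mu$-measurable by the construction of the selectors $s_i$, so the joined graph joining $\Lambda_\xi:=(\id_X,\phi_{1,\xi},\dots,\phi_{n,\xi})_*\nu_\xi\in\Prob(X\times\prod_i M_{j_i(\xi)})$ depends $\mu$-measurably on $\xi$ (using Lemma~\ref{lem2} to reconstruct each $\phi_{i,\xi}$ jointly measurably from $\lambda_{i,\xi}$, and pushing forward, which is Borel). The support of the $M$-marginal of $\Lambda_\xi$ is a closed subset depending measurably on $\xi$ (the support map into the Effros Borel space is Borel). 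To identify the isomorphism type one sets up, exactly as for $R_n$, a Borel selection relation: over $\xi$, search over $j'\in\N$, over $\beta\in\mathsf P_{j'}$, and over measure isomorphisms realized as graph joinings between $M'_\xi$ and $M_{j'}$ intertwining $\alpha'_\xi$ with $\beta$, together with the condition that the composite factor map $X\to M_{j'}$ generates the same $\sigma$-algebra $\cA_n$ restricted to the fiber (equivalently, that the graph joining of $X$ with $M_{j'}$ has the prescribed finite-dimensional correlations with $f_1,\dots$, encoding that $Z_n$-measurable functions are captured). Each clause is Borel by the same arguments as in Lemma~\ref{lem:Rn_borel} (pushforward maps, marginal maps, integrals of bounded Borel functions against $\lambda$ are all Borel), nonemptiness for $\mu$-a.e.\ $\xi$ follows from the fiberwise nilsystem fact above, and Jankov--von Neumann (Theorem~\ref{thm:JvN}) supplies a $\mu$-measurable selector $\xi\mapsto (j'_n(\xi),\alpha'_n(\xi),\kappa_{n,\xi})$. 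Setting $\Omega'_{n,j}:=\{\xi:j'_n(\xi)=j\}$ and assembling $\bigsqcup_j \Omega'_{n,j}\times M_j$ with action $\alpha'_n(\xi)$ gives a bundle of $k$-step nilsystems in the sense of Definition~\ref{Definition: Bundles of nilsystems}, and the reconstructed composite map $x\mapsto(\pi_0(x),\psi_{n,\pi_0(x)}(x))$ is a factor map $X\to Z_n$ with image $\sigma$-algebra $\cA_n$; the bundle projection is $\pi_0$ by construction.

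The main obstacle I expect is the fiberwise fact that the joint factor is again a $k$-step nilsystem, and more precisely making its \emph{recognition} into a Borel condition. Abstractly a joining of two $k$-step nilsystems need not literally be a nilsystem on the nose — one knows it is an \emph{inverse limit} of $k$-step nilsystems, but here the fibers are genuine finite-dimensional nilsystems (not inverse limits), so the joint factor, being a factor of a single $k$-step nilsystem $\prod_i M_{j_i(\xi)}$, is again a single $k$-step nilsystem; the cleanest route is to invoke that a factor of a $k$-step nilsystem is a $k$-step nilsystem (the factor corresponds to a closed subgroup, giving $N'/\Gamma'$). The measurability subtlety is then encoding ``$\psi_{n,\xi}$ generates $\cA_n\restriction_\xi$'' Borel-measurably; I would handle this by requiring, for every finite tuple from $\mathcal D$, that the $L^2(\nu_\xi)$-distance from the tuple to its conditional expectation on $\psi_{n,\xi}^{-1}\cB(M_{j'})$ equals the distance to its conditional expectation on $\Pi_n^{-1}$, which is a countable conjunction of Borel conditions of the type already verified in Lemma~\ref{lem:Rn_borel}. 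The remaining bookkeeping — that $Z_n$ so constructed is the factor generated by $\pi_1,\dots,\pi_n$, and that the bundle projection agrees with $\pi_0$ — is routine and follows the template established for the $Y_n$.
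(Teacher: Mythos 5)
Your proposal is correct and shares its first half with the paper's proof: both arguments identify the fiber of $Z_n$ over $\xi$ with the image of $(X,\nu_\xi)$ under $\Pi_n$ inside the product nilsystem $\prod_{i}M_{j_i(\xi)}$, and invoke the fact that this image (the support of an ergodic joining of nilsystems, carrying its Haar measure) is a sub-nilmanifold, hence a $k$-step nilsystem. Where you diverge is in the measurable trivialization. The paper does \emph{not} run a second Jankov--von Neumann selection over abstract isomorphisms: instead it fixes the product nilmanifold $M_{\mathbf i}$ on each piece of the refined partition, observes that $\xi\mapsto(\Pi_n)_*\nu_\xi$ is a Borel map into the space $\mathcal M_{\mathrm{Haar}}$ of Haar measures on sub-nilmanifolds of $M_{\mathbf i}$, and proves a separate countability-plus-selection claim: $\mathcal M_{\mathrm{Haar}}$ is a countable union of $N$-orbits, each orbit contains the Haar measure of a rational sub-nilmanifold (of which there are countably many by \cite[Ch.~10, Lemma~27]{hkb}), and Borel sections of the coset maps $N/\mathrm{Stab}(W_q)\to N$ glue to a Borel map assigning to each fiber an \emph{affine automorphism} of $M_{\mathbf i}$ carrying it onto a fixed reference sub-nilmanifold $W_q$. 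Your route instead reuses the selection-relation machinery of Lemma~\ref{lem:Rn_borel}: you search over the master list $(M_{j'})$, over $\beta\in\mathsf P_{j'}$, and over two-sided graph joinings encoding measure isomorphisms, and apply Theorem~\ref{thm:JvN} again. This buys uniformity (no new geometric input beyond Theorem~\ref{t:countability}) at the cost of additional Borelness verifications you correctly flag but leave somewhat schematic: Effros-measurability of the support of the fiber marginal, Borelness of the set of isomorphism joinings, and the $\sigma$-algebra-generation clause (the last is actually automatic if you insist the selected joining be an isomorphism of $M'_\xi$ with $M_{j'}$ post-composed with $\Pi_n$, which simplifies your relation). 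One small imprecision: you waver between ``factors of nilsystems are nilsystems'' and ``ergodic joinings of nilsystems are supported on sub-nilmanifolds''; the latter is what both you and the paper actually need, and it is the cleaner input here since your fiber map is onto the support of $(\Pi_n)_*\nu_\xi$ by construction.
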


\begin{proof}
For $\ell=1,\dots,n$, write
\[
Y_\ell=\bigsqcup_{i\geq 0}\Omega_i^{(\ell)}\rtimes_{\alpha_i^{(\ell)}} M_i,
\]
where $M_i=N_i/\Gamma_i$ is one of the fixed representatives. For a tuple $\mathbf i=(i_1,\dots,i_n)$, put
\[
\Omega_{\mathbf i}:=\bigcap_{\ell=1}^n \Omega_{i_\ell}^{(\ell)},\qquad
M_{\mathbf i}:=M_{i_1}\times\cdots\times M_{i_n},
\]
and
\[
N_{\mathbf i}:=N_{i_1}\times\cdots\times N_{i_n},
\qquad
\Gamma_{\mathbf i}:=\Gamma_{i_1}\times\cdots\times \Gamma_{i_n}.
\]
For $\xi\in\Omega_{\mathbf i}$, let
\[
\eta_\xi:=(\Pi_n)_*\nu_\xi\in\Prob(M_{\mathbf i})
\]
after the evident identification of the fiber over $\xi$ with $M_{\mathbf i}$. The map $\xi\mapsto\eta_\xi$ is Borel. Let
\[
\beta_\xi(t):=
\bigl(\alpha_{i_1}^{(1)}(\xi)(t),\dots,\alpha_{i_n}^{(n)}(\xi)(t)\bigr)
\in N_{\mathbf i}.
\]
Then $\beta_\xi\in\Hom(\Z^d,N_{\mathbf i})$ depends measurably on $\xi$, and $\eta_\xi$ is invariant and ergodic under the nilrotation action given by $\beta_\xi$. Ergodicity follows because $\eta_\xi$ is the image of the ergodic system $(X,\nu_\xi,T)$.

We use the standard theorem that an ergodic joining of finitely many ergodic nilsystems is Haar measure on a closed subnilmanifold of the product nilmanifold, and that the product nilrotation restricts to an ergodic nilrotation on that subnilmanifold  \cite[Chapter~11, Proposition~15]{hkb}. Hence, for $\mu$-a.e.\ $\xi\in\Omega_{\mathbf i}$, the measure $\eta_\xi$ is Haar measure on a subnilmanifold of $M_{\mathbf i}$ of step at most $k$.

We also use the following standard measurable trivialization consequence of the countability of isomorphism classes of nilmanifolds (Theorem \ref{t:countability}) and the Jankov--von Neumann uniformization theorem (Theorem \ref{thm:JvN}).

\begin{claim}\label{Claim: measurable_trivialization}
Let $M=N/\Gamma$ be a nilmanifold of step at most $k$. Let $\mathcal S(M)$ be the standard Borel set of pairs $(\eta,\beta)$, where $\eta\in\Prob(M)$ is Haar measure on a subnilmanifold of $M$, $\beta\in\Hom(\Z^d,N)$ preserves $\eta$, and the induced action on $(\supp\eta,\eta)$ is ergodic. Then there exist a countable family of $k$-step nilmanifolds
\[
W_q=H_q/\Lambda_q,\qquad q\geq 0,
\]
a Borel partition $\mathcal S(M)=\bigsqcup_{q\geq 0}\mathcal S_q$, and, for each $q$, Borel assignments
\[
(\eta,\beta)\in\mathcal S_q
\longmapsto
\theta_{\eta,\beta}:W_q\to \supp\eta
\]
and
\[
(\eta,\beta)\in\mathcal S_q
\longmapsto
\widetilde\beta_{\eta,\beta}\in\Hom(\Z^d,H_q)
\]
such that $\theta_{\eta,\beta}$ is a measure-preserving nilmanifold isomorphism from $(W_q,m_{W_q})$ to $(\supp\eta,\eta)$ and
\[
\theta_{\eta,\beta}\bigl(\widetilde\beta_{\eta,\beta}(t)\cdot w\bigr)
=
\beta(t)\cdot \theta_{\eta,\beta}(w)
\]
for all $t\in\Z^d$ and $m_{W_q}$-a.e.\ $w\in W_q$.
\end{claim}

Applying Claim~\ref{Claim: measurable_trivialization} to $M_{\mathbf i}$ and to the Borel map
\[
\xi\mapsto(\eta_\xi,\beta_\xi),
\]
we obtain a countable measurable partition of $\Omega_{\mathbf i}$ into sets $\Omega_{\mathbf i,q}$ and, on each $\Omega_{\mathbf i,q}$, a fixed nilmanifold $W_q=H_q/\Lambda_q$ together with measurable homomorphisms
\[
\xi\mapsto \widetilde\beta_\xi\in\Hom(\Z^d,H_q)
\]
such that the fiber of $Z_n$ over $\xi$ is identified with $(W_q,m_{W_q})$ and the action becomes
\[
t\cdot(\xi,w)=(\xi,\widetilde\beta_\xi(t)\cdot w).
\]
The action is ergodic for $\mu$-a.e.\ $\xi$ by construction. Since the collection of tuples $\mathbf i$ and indices $q$ is countable, this gives a countable measurable partition of $\Omega$ over which $Z_n$ is represented as a bundle of $k$-step nilsystems. The bundle projection is the projection to $\Omega$, which agrees with $\pi_0$ because each $\pi_\ell$ lies over $\pi_0$.
\end{proof}

\subsection*{Taking the inverse limit}

\begin{lemma}\label{l3}
For every $r\in\mathbb N$,
\[
\lim_{n\to\infty}\big\| f_r-\E_\nu(f_r\mid \cA_n)\big\|_{L^2(\nu)}=0.
\]
\end{lemma}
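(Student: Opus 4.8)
The plan is to combine the pointwise approximation built into the selection relation $R_n$ with the ergodic decomposition and dominated convergence. The key point is that $\cA_n$ contains the $\sigma$-algebra $\pi_n^{-1}\cB(Y_n)$, so $\E_\nu(f_r\mid\cA_n)$ is at least as good an approximation to $f_r$ as $\E_\nu(f_r\mid\pi_n^{-1}\cB(Y_n))$. Thus it suffices to bound $\|f_r-\E_\nu(f_r\mid\pi_n^{-1}\cB(Y_n))\|_{L^2(\nu)}$.

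First I would fix $r$ and work fiberwise: by the ergodic decomposition, $\|f_r-\E_\nu(f_r\mid\pi_n^{-1}\cB(Y_n))\|_{L^2(\nu)}^2=\int_\Omega\|f_r-\E_{\nu_\xi}(f_r\mid\pi_{n,\xi}^{-1}\cB(M_{j_n(\xi)}))\|_{L^2(\nu_\xi)}^2\,d\mu(\xi)$, using that the fiberwise conditional expectation onto $\phi_{n,\xi}^{-1}\cB(M_{j_n(\xi)})$ disintegrates the global one (as $\pi_n$ over $\Omega$ is exactly $\xi\mapsto\phi_{n,\xi}$). Next, for each $\xi$ and each $n\ge r$, condition (R3) in the selection relation, together with the fact that $\lambda_{n,\xi}=(\id,\phi_{n,\xi})_*\nu_\xi$ is a graph joining, gives some $q$ with $\|f_r-u_{j_n(\xi),q}\circ\phi_{n,\xi}\|_{L^2(\nu_\xi)}^2<2^{-2n}$. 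Since $u_{j_n(\xi),q}\circ\phi_{n,\xi}$ is measurable with respect to $\phi_{n,\xi}^{-1}\cB(M_{j_n(\xi)})$, the conditional expectation is the nearest such function, so $\|f_r-\E_{\nu_\xi}(f_r\mid\phi_{n,\xi}^{-1}\cB(M_{j_n(\xi)}))\|_{L^2(\nu_\xi)}^2\le 2^{-2n}$ for $\mu$-a.e.\ $\xi$ and all $n\ge r$.

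Finally I would integrate: for $n\ge r$, $\|f_r-\E_\nu(f_r\mid\cA_n)\|_{L^2(\nu)}^2\le\|f_r-\E_\nu(f_r\mid\pi_n^{-1}\cB(Y_n))\|_{L^2(\nu)}^2=\int_\Omega\|f_r-\E_{\nu_\xi}(f_r\mid\phi_{n,\xi}^{-1}\cB(M_{j_n(\xi)}))\|_{L^2(\nu_\xi)}^2\,d\mu(\xi)\le 2^{-2n}$, which tends to $0$. Alternatively, without worrying about uniformity one can just note each fiberwise term tends to $0$ as $n\to\infty$ and apply dominated convergence (the integrands are bounded by $4\|f_r\|_\infty^2$), but the explicit bound $2^{-2n}$ makes this immediate. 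I expect the only real point requiring care is the identity that the global conditional expectation onto $\pi_n^{-1}\cB(Y_n)$ disintegrates over $\Omega$ into the fiberwise conditional expectations onto $\phi_{n,\xi}^{-1}\cB(M_{j_n(\xi)})$ — this is standard (since $Y_n\to\Omega$ is the bundle projection and $\cB(Y_n)$ restricted to the fiber over $\xi$ is $\cB(M_{j_n(\xi)})$), but it is the step where one must be slightly attentive to measurability of the partition $\xi\mapsto j_n(\xi)$, which is guaranteed by Lemma~\ref{lem:Rn_borel} and the Jankov--von Neumann selection.
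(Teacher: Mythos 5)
Your proof is correct and follows essentially the same route as the paper's: reduce to the single factor $\sigma(\pi_n)$, disintegrate over $\Omega$, and use (R3) for $n\ge r$ together with the graph-joining identity and the optimality of conditional expectation to bound each fiber integrand by $2^{-2n}$. Your extra remarks (the dominated-convergence fallback and the measurability of $\xi\mapsto j_n(\xi)$) are consistent with, and slightly more explicit than, the paper's argument.
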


\begin{proof}
Fix $r$ and take $n\ge r$. Since $\cA_n=\sigma(\pi_1,\dots,\pi_n)$ contains $\sigma(\pi_n)$,
\[
\|f_r-\E_\nu(f_r\mid \cA_n)\|_{2}\le \|f_r-\E_\nu(f_r\mid \sigma(\pi_n))\|_{2}.
\]
Disintegrating over $\Omega$ and using that $\pi_n$ restricts fiberwise to $\phi_{n,\xi}$ gives
\[
\|f_r-\E_\nu(f_r\mid \sigma(\pi_n))\|_2^2
=\int_\Omega \|f_r-\E_{\nu_\xi}(f_r\mid \phi_{n,\xi}^{-1}\cB(M_{j_n(\xi)}))\|_{L^2(\nu_\xi)}^2\,d\mu(\xi).
\]
But $(\xi;s_n(\xi))\in R_n$ implies that on $(X,\nu_\xi)$ the function $f_r$ is within $2^{-n}$ in $L^2$ of the factor $\phi_{n,\xi}^{-1}\cB(M_{j_n(\xi)})$, since (R3) gives a factor-measurable approximation and conditional expectation is the best $L^2$-approximation onto the factor. Thus the integrand is at most $2^{-2n}$ for $\mu$-a.e.\ $\xi$, and the claim follows.
\end{proof}

\begin{lemma}\label{l4}
Let $\cA_\infty:=\bigvee_{n\ge 1}\cA_n$. Then $\cA_\infty$ coincides with the full $\sigma$-algebra of $X$ modulo $\nu$-null sets.
\end{lemma}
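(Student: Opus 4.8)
The plan is to derive this from Lemma~\ref{l3} by a soft martingale-convergence argument, using the density of $\mathcal D$. First I would record that the factors $Z_n$, equivalently the $\sigma$-algebras $\cA_n=\sigma(\pi_1,\dots,\pi_n)$, form an \emph{increasing} sequence, so the $L^2$ martingale convergence theorem applies: for every $f\in L^2(\nu)$ one has $\E_\nu(f\mid\cA_n)\to\E_\nu(f\mid\cA_\infty)$ in $L^2(\nu)$ as $n\to\infty$. Combining this with Lemma~\ref{l3}, which asserts $\E_\nu(f_r\mid\cA_n)\to f_r$ in $L^2(\nu)$ for every $r\in\N$, I would conclude that $\E_\nu(f_r\mid\cA_\infty)=f_r$ in $L^2(\nu)$; that is, each $f_r$ agrees $\nu$-a.e.\ with an $\cA_\infty$-measurable function, so $f_r\in L^2(X,\cA_\infty,\nu)$.

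Next I would invoke that $\mathcal D=\{f_r:r\in\N\}$ is dense in the unit ball of $L^2(\nu)$. The subspace $L^2(X,\cA_\infty,\nu)$ is closed in $L^2(\nu)$ and, by the previous step, contains $\mathcal D$; being closed, it contains the closure of $\mathcal D$, namely the whole unit ball of $L^2(\nu)$; and being a linear subspace closed under scaling, it therefore equals all of $L^2(\nu)$.

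Finally I would translate this back into a statement about $\sigma$-algebras: for any $\nu$-measurable set $A\subseteq X$, the indicator $\mathbf 1_A$ lies in $L^2(\nu)=L^2(X,\cA_\infty,\nu)$, hence agrees $\nu$-a.e.\ with an $\cA_\infty$-measurable function, which forces $A\in\cA_\infty$ modulo $\nu$-null sets. Since $A$ was arbitrary, $\cA_\infty$ coincides with the full $\sigma$-algebra of $X$ modulo $\nu$, as claimed.

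I do not anticipate a genuine obstacle here — everything is standard Hilbert-space and martingale theory. The only point meriting a word of care is that the approximation furnished by Lemma~\ref{l3} (inherited from clause (R3)) is approximation in the \emph{full} space $L^2(\nu)$, so that the density of $\mathcal D$ in the unit ball of $L^2(\nu)$ — exactly as fixed in the standing setup — immediately upgrades to density of $\cA_\infty$-measurable functions in all of $L^2(\nu)$; there is no restriction to a proper factor at any stage. (Combined with the standing assumption $\mathrm X=\Zk_k(\mathrm X)$, this is what will let us identify $\Zk_k(\mathrm X)$ with the inverse limit of the $Z_n$ in the proof of Theorem~\ref{t:main}.)
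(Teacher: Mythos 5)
Your proof is correct and follows essentially the same route as the paper: both arguments reduce the claim to showing $L^2(\cA_\infty)=L^2(\nu)$ by combining Lemma~\ref{l3} with the density of $\mathcal D$ in the unit ball. The only cosmetic difference is that you pass through the $L^2$ martingale convergence theorem for the increasing filtration $(\cA_n)$ to place each $f_r$ in the closed subspace $L^2(\cA_\infty)$, whereas the paper argues dually, showing by a direct Cauchy--Schwarz estimate that any $g$ orthogonal to $L^2(\cA_\infty)$ must vanish.
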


\begin{proof}
Let $g\in L^2(X,\nu)$ be orthogonal to $L^2(\cA_\infty)$. Then $\E(g\mid \cA_n)=0$ for all $n$.

Fix $\varepsilon>0$ and choose $f_r\in\mathcal D$ such that $\|g-f_r\|_2<\varepsilon$.
By Lemma~\ref{l3}, choose $n\ge r$ such that
\[
\|f_r-\E(f_r\mid \cA_n)\|_2<\varepsilon.
\]
Now use the orthogonality of $g$ to $L^2(\cA_n)$: since $\E(f_r\mid \cA_n)\in L^2(\cA_n)$,
\[
\langle g,\, \E(f_r\mid \cA_n)\rangle
=
\langle \E(g\mid \cA_n),\, f_r\rangle
=0.
\]
Therefore,
\begin{align*}
\|g\|_2^2
&=\langle g,g\rangle\\
&=\langle g,\, g-f_r\rangle + \langle g,\, f_r-\E(f_r\mid \cA_n)\rangle + \langle g,\, \E(f_r\mid \cA_n)\rangle\\
&=\langle g,\, g-f_r\rangle + \langle g,\, f_r-\E(f_r\mid \cA_n)\rangle.
\end{align*}
By Cauchy--Schwarz,
\[
\|g\|_2^2
\le
\|g\|_2\,\|g-f_r\|_2 + \|g\|_2\,\|f_r-\E(f_r\mid \cA_n)\|_2
< 2\varepsilon\,\|g\|_2.
\]
If $\|g\|_2>0$, divide by $\|g\|_2$ to get $\|g\|_2<2\varepsilon$; since $\varepsilon>0$ was arbitrary, $\|g\|_2=0$.
Hence $g=0$, so $L^2(\cA_\infty)=L^2(X)$ and $\cA_\infty$ is the full $\sigma$-algebra modulo null sets.
\end{proof}

\noindent\emph{Conclusion of Theorem \ref{t:main}.}
The factors $Z_n$ form an increasing sequence of factors, and the natural maps $Z_{n+1}\twoheadrightarrow Z_n$ give an inverse system
\[
Z_1 \twoheadleftarrow Z_2 \twoheadleftarrow \cdots \twoheadleftarrow X.
\]
By Lemma~\ref{lem:join_bundle}, each $Z_n$ is a bundle of $k$-step nilsystems over $\Omega$. Lemma~\ref{l4} shows that $\bigvee_n \sigma(Z_n)$ is the full
$\sigma$-algebra of $X$ modulo null sets, hence $X$ is measurably isomorphic to the inverse limit $\varprojlim_n Z_n$.
This proves the reduced case $\mathrm X=\Zk_k(\mathrm X)$, and therefore, by the initial reduction, proves Theorem~\ref{t:main}.

\section{Proof of Theorem \ref{t:vertical_nilcharacters}}\label{s:vertical_nilcharacters}

Without loss of generality, we can assume that $\mathrm X=\Zk_k(\mathrm X)$. By Theorem~\ref{t:main}, there is an inverse system of factors
\[
\mathrm{Y}_1 \twoheadleftarrow \mathrm{Y}_2 \twoheadleftarrow \cdots \twoheadleftarrow \mathrm{X}
\]
such that each $\mathrm{Y}_n$ is a bundle of nilsystems of step at most $k$ over the invariant factor $\Omega$, and $\mathrm{X}\cong \varprojlim_n \mathrm{Y}_n$ measure-theoretically. Hence
\[
L^2(\nu)=\overline{\bigcup_{n\geq 1}L^2(\mathrm{Y}_n)},
\]
where each $L^2(\mathrm{Y}_n)$ is identified with its pullback to $L^2(\nu)$. It is therefore enough to prove that, for every bundle $\mathrm Y$ of nilsystems of step at most $k$, the space $L^2(\mathrm{Y})$ is the closed linear span of the $k$-vertical nilcharacters on $\mathrm{Y}$.

Fix such a bundle
\[
\mathrm{Y}=\bigsqcup_{i\geq 0}\Omega_i\rtimes_{\alpha_i} M_i,
\qquad
M_i=N_i/\Gamma_i.
\]
Since the partition $(\Omega_i)_{i\geq 0}$ is countable, it suffices to work on a single component $\Omega_i\times M_i$. We suppress the index $i$ and write
\[
M=N/\Gamma,\qquad A:=N_k/(N_k\cap\Gamma).
\]

For each $\chi\in \widehat A$, define
\[
\mathcal H_\chi
:=
\Bigl\{
F\in L^2(\Omega_i\times M):
F(\xi,a\cdot m)=\chi(a)F(\xi,m)
\text{ for } \mu|_{\Omega_i}\otimes m_A\otimes m_M\text{-a.e. }(\xi,a,m)
\Bigr\}.
\]
Every element of $\mathcal H_\chi$, extended by $0$ outside $\Omega_i\times M$, is a $k$-vertical nilcharacter on $\mathrm Y$. Thus it remains to show that
\[
L^2(\Omega_i\times M)
=
\overline{\operatorname{span}}\bigcup_{\chi\in\widehat A}\mathcal H_\chi.
\]

Let $U_a$ denote the unitary action of $A$ on $L^2(\Omega_i\times M)$ given by
\[
(U_aF)(\xi,m):=F(\xi,a\cdot m).
\]
For $\chi\in\widehat A$, define
\[
P_\chi F
:=
\int_A U_aF\,\overline{\chi(a)}\,dm_A(a),
\]
where the integral is taken in $L^2(\Omega_i\times M)$. Then $P_\chi$ is the orthogonal projection onto $\mathcal H_\chi$. Indeed, for $b\in A$,
\begin{align*}
(P_\chi F)(\xi,b\cdot m)
&=
\int_A F(\xi,a b\cdot m)\,\overline{\chi(a)}\,dm_A(a)\\
&=
\chi(b)\int_A F(\xi,c\cdot m)\,\overline{\chi(c)}\,dm_A(c)\\
&=
\chi(b)(P_\chi F)(\xi,m),
\end{align*}
and the usual character orthogonality relations on the compact abelian group $A$ give
\[
P_\chi P_{\chi'}
=
\begin{cases}
P_\chi, & \chi=\chi',\\
0, & \chi\neq \chi'.
\end{cases}
\]

Now suppose that $F\in L^2(\Omega_i\times M)$ is orthogonal to every $\mathcal H_\chi$. Then
\[
P_\chi F=0
\qquad\text{for every }\chi\in\widehat A.
\]
Consider the continuous positive-definite function
\[
c:A\to\mathbb C,
\qquad
c(a):=\langle U_aF,F\rangle_{L^2(\Omega_i\times M)}.
\]
For every $\chi\in\widehat A$, its $\chi$-Fourier coefficient is
\[
\int_A c(a)\,\overline{\chi(a)}\,dm_A(a)
=
\left\langle
\int_A U_aF\,\overline{\chi(a)}\,dm_A(a),
F
\right\rangle
=
\langle P_\chi F,F\rangle
=
0.
\]
Since the characters of the compact abelian group $A$ span a dense subspace of $C(A)$, all Fourier coefficients of $c$ vanish, and hence $c=0$. In particular, $0=c(e)=\|F\|_2^2$, so $F=0$. Therefore the closed linear span of the spaces $\mathcal H_\chi$, $\chi\in\widehat A$, is all of $L^2(\Omega_i\times M)$.

Applying this argument on each component $\Omega_i\times M_i$ and summing over $i\geq 0$, we obtain that $L^2(Y)$ is the closed linear span of the $k$-vertical nilcharacters on $\mathrm Y$. Pulling these functions back through the factor maps $\mathrm X\to \mathrm Y_n$ proves the theorem.

\begin{remark}[Relative orthonormality]\label{rem:relative_orthonormal_vertical_nilcharacters}
The proof gives a slightly more structured statement on each bundle factor.
Let
\[
\mathrm{Y}=\bigsqcup_{i\geq 0}\Omega_i\rtimes_{\alpha_i} M_i,
\qquad M_i=N_i/\Gamma_i,
\]
be a bundle of nilsystems of step at most $k$, and put
\[
A_i:=A_k(M_i)=(N_i)_k/((N_i)_k\cap\Gamma_i).
\]
For $\chi\in\widehat{A_i}$, let
\[
\mathcal K_{i,\chi}
:=
\Bigl\{
h\in L^2(M_i):
h(a\cdot m)=\chi(a)h(m)
\text{ for }m_{A_i}\otimes m_{M_i}\text{-a.e. }(a,m)
\Bigr\}.
\]
The argument with the projections $P_\chi$ shows that
\[
L^2(M_i)=\bigoplus_{\chi\in\widehat{A_i}}\mathcal K_{i,\chi}.
\]
Choose, for each $i$ and each $\chi\in\widehat{A_i}$, an orthonormal basis
$(e_{i,\chi,\ell})_{\ell\geq 1}$ of $\mathcal K_{i,\chi}$. Then the functions
\[
F_{i,\chi,\ell}(\xi,m)
:=
1_{\Omega_i}(\xi)e_{i,\chi,\ell}(m),
\qquad \xi\in\Omega_i,\ m\in M_i,
\]
extended by zero outside $\Omega_i\times M_i$, are $k$-vertical nilcharacters
on $\mathrm{Y}$.

Moreover, the family $(F_{i,\chi,\ell})_{i,\chi,\ell}$ is relatively
orthonormal over the invariant factor $\Omega$ in the following sense:
\[
\E\bigl(F_{i,\chi,\ell}\overline{F_{i',\chi',\ell'}}\mid\Omega\bigr)=0
\]
unless $(i,\chi,\ell)=(i',\chi',\ell')$, while
\[
\E\bigl(|F_{i,\chi,\ell}|^2\mid\Omega\bigr)=1_{\Omega_i}.
\]
Furthermore, the closed linear span of all products
\[
F_{i,\chi,\ell}\psi,
\qquad
\psi\in L^\infty(\Omega),
\]
is all of $L^2(Y)$. Thus every bundle factor appearing in
Theorem~\ref{t:main} admits a relative orthonormal basis of $k$-vertical
nilcharacters, in direct analogy with the relative orthonormal basis of
eigenfunctions of Frantzikinakis and Host \cite[Section~5.2]{fh}.

Passing to the inverse limit in the proof of Theorem~\ref{t:vertical_nilcharacters}
therefore gives a countable family of non-ergodic $k$-vertical nilcharacters
on $\mathrm X$ whose $L^\infty(\Omega)$-linear span is dense in
$L^2(\Zk_k(\mathrm X))$. Since the relative orthonormal bases obtained from
different bundle factors in the inverse system need not be mutually relatively
orthogonal, we only use this totality statement for the inverse limit.
\end{remark}

\end{document}